\newtheorem{thm}{Theorem}[section]
\newtheorem{prop}[thm]{Proposition}
\newtheorem{lem}[thm]{Lemma}
\newtheorem{cor}[thm]{Corollary}
\theoremstyle{definition}
\newtheorem{defn}[thm]{Definition}
\theoremstyle{remark}
\newtheorem{rem}[thm]{Remark}
\renewcommand{\d}{\mathrm d}                           
\renewcommand{\d}{\mathrm d}               
\def\bb#1#2{\left\{#1,#2\right\}}
\def\Dorf#1#2{\big[\!\!\big[#1,#2\big]\!\!\big]}
\begin{document}
\title{Hypersymplectic structures with torsion on Lie algebroids}

\author{P. Antunes}
\address{CMUC, Department of Mathematics, University of Coimbra, 3001-454 Coimbra, Portugal}
\email{pantunes@mat.uc.pt}
\author{J.M. Nunes da Costa}
\address{CMUC, Department of Mathematics, University of Coimbra, 3001-454 Coimbra, Portugal}
\email{jmcosta@mat.uc.pt}

\begin{abstract}
Hypersymplectic structures with torsion on Lie algebroids are investigated. We show that each hypersymplectic structure with torsion on a Lie algebroid determines three Nijenhuis morphisms. From a contravariant point of view, these structures are twisted Poisson structures. We prove the existence of a one-to-one correspondence between hypersymplectic structures with torsion and hyperk\"{a}hler structures with torsion.  We show that given a Lie algebroid with a hypersymplectic structure with torsion, the deformation of the Lie algebroid structure by any of the transition morphisms does not affect the hypersymplectic structure with torsion. We also show that if a triplet of $2$-forms is a hypersymplectic structure with torsion on a Lie algebroid $A$, then the triplet of the inverse bivectors is a hypersymplectic structure with torsion for a certain Lie algebroid structure on the dual $A^*$, and conversely. Examples of hypersymplectic structures with torsion are included.
\end{abstract}

\maketitle

%
\section*{Introduction}             %
\label{sec:introduction}           %
Hypersymplectic structures with torsion on Lie algebroids were introduced in \cite{AC14a}, in relation with hypersymplectic structures on Courant algebroids, when these Courant algebroids are doubles of quasi-Lie and proto-Lie bialgebroids. In fact, while looking for examples of hypersymplectic structures on Courant algebroids we found in \cite{AC14a}, in a natural way, hypersymplectic structures with torsion on Lie algebroids. The aim of this article is to study these structures on Lie algebroids. We look at them as the ``symplectic version'' of hyperk\"{a}hler structures with torsion and also from a contravariant perspective, as twisted Poisson structures.

 A triplet $(\omega_1, \omega_2, \omega_3)$ of non-degenerate $2$-forms on a Lie algebroid $(A, \mu)$ is a hypersymplectic structure with torsion if the transition morphisms $N_i, i=1,2,3,$ satisfy $N_i^2=-{\rm id}_A$ and $N_1 \d \omega_1= N_2 \d \omega_2=N_3 \d \omega_3$ (Definition~\ref{def_HST}). These structures can be viewed as a generalization of hypersymplectic structures on Lie algebroids, a structure we have studied in \cite{AC13}. We prove that they are in a one-to-one correspondence with  hyperk\"{a}hler structures with torsion~\cite{GP00,CS08}, also known as HKT structures, which first appeared in \cite{HP96} in relation with sigma models in string theory.
Let us briefly recall what a HKT manifold is. Let $M$ be  a hyperhermitian manifold, i.e., a manifold equipped with three complex structures $N_1$, $N_2$ and $N_3$ satisfying $N_1N_2=-N_2N_1=N_3$ and a metric $g$ compatible with the three complex structures. If  there exists a linear connection $\nabla$ on $M$ such that $\nabla g=0$, $\nabla N_1= \nabla N_2=\nabla N_3=0$ and $H$, defined by $H(X,Y,Z)= g(X,\nabla_Y Z-\nabla_Z Y - [Y,Z])$, is a $3$-form on $M$, then $M$ is a  HKT manifold.
In \cite{GP00} it was proved that the condition that $H$ be a $3$-form can be replaced by the following equivalent requirement: $N_1 \d \omega_1= N_2 \d \omega_2=N_3 \d \omega_3$, where $\omega_1$, $\omega_2$ and $\omega_3$ are the associated K\"{a}hler forms.
Later, in \cite{CS08}, the authors showed that the assumption that $N_1$, $N_2$ and $N_3$ are Nijenhuis can be removed from the definition of HKT manifold, because the equalities $N_1 \d \omega_1= N_2 \d \omega_2=N_3 \d \omega_3$ imply the vanishing of the Nijenhuis torsion of the morphisms $N_1$, $N_2$ and $N_3$.
Thus, the definition of a HKT manifold can be simplified, requiring that it is an almost hyperhermitian manifold satisfying $N_1 \d \omega_1= N_2 \d \omega_2=N_3 \d \omega_3$. Here, we extend the notion of HKT structure to Lie algebroids. Our definition of hypersymplectic structure with torsion and HKT structure is more general than the usual one, since we consider the cases of (almost) complex and para-complex morphisms $N_i$.
We also look at hypersymplectic structures with torsion on Lie algebroids from a different perspective,  by presenting an alternative definition in terms of bivectors instead of $2$-forms (Theorem \ref{thm_HST_equivalent_contravariant_form}). These bivectors are twisted Poisson, also known as Poisson with a $3$-form background \cite{sw01}. Moreover, we prove that the almost complex morphisms $N_1$, $N_2$ and $N_3$, that are constructed out of the $2$-forms and the twisted Poisson bivectors, are in fact Nijenhuis morphisms (Theorem \ref{Thm_PHST->Ni_Nijenhuis}). In other words, a hypersymplectic structure with torsion on a Lie algebroid $A$ determines three Nijenhuis morphisms and three twisted Poisson bivectors. It is well known~\cite{magriYKS} that if $(A,\mu)$ is a Lie algebroid and $N$ is a Nijenhuis morphism, the deformation (in a certain sense) of $\mu$  by $N$ yields a new Lie algebroid structure on A, that we denote by $\mu_N$.  On the other hand, if a Lie algebroid $(A,\mu)$ is equipped with a nondegenerate twisted Poisson bivector $\pi$, the dual vector bundle $A^*$ inherits a Lie algebroid structure given by $\mu_{\pi}+\frac{1}{2}\bb{\omega}{[\pi, \pi]}$, where $\omega$ is the inverse of $\pi$ (Proposition \ref{prop_Twist_Poisson_induces_Lie_algebroid_in_dual}).
So, two natural questions arise.

\begin{enumerate}
\item [1.]
\emph{If $(\omega_1, \omega_2, \omega_3)$ is a hypersymplectic structure with torsion on a Lie algebroid $(A, \mu)$, does it remain a hypersymplectic structure with torsion on the Lie algebroid $(A,\mu_{N_i})$, for $i=1,2,3$?}

\

In Theorem \ref{thm_PHST_on_A_iff_PHST_on_A_N} we answer this question positively, showing that $(\omega_1, \omega_2, \omega_3)$ is a hypersymplectic structure with torsion on $(A, \mu)$ if and only if it is a hypersymplectic structure with torsion on $(A,\mu_{N_i})$, for $i=1,2,3$.

\

\

\item [2.]
\emph{ Does a hypersymplectic structure with torsion on a Lie algebroid $A$ induce a hypersymplectic structure with torsion on the Lie algebroid $A^*$?}

 \

 The  answer is given in Theorem \ref{Thm_PHST_on_A_iff_PHST_on_A*}, where we prove that $(\omega_1, \omega_2, \omega_3)$ is a hypersymplectic structure with torsion on $(A, \mu)$ if and only if $(\pi_1, \pi_2, \pi_3)$ is a hypersymplectic structure with torsion on $\left(A^*, -\mu_{\pi_i}-\frac{1}{2}\bb{\omega_i}{[\pi_i, \pi_i]}\right)$, $i=1,2,3$, where $\pi_i$ is the inverse of $\omega_i$.

\end{enumerate}

Question 2 was answered in \cite{AC14} for the case of hypersymplectic structures (without torsion) on Lie algebroids.

\


In Section~\ref{sec1} we define  {\large${\boldsymbol\varepsilon}$}-hypersymplectic structures
with torsion on a Lie algebroid and present some of their properties. In Section~\ref{sec2} we characterize them in terms of twisted Poisson bivectors. The structure induced on the base manifold of a Lie algebroid with a hypersymplectic structure with torsion is described in Section~\ref{sec3}. In Section~\ref{sec4} we introduce the notion of hyperk\"{a}hler structure with torsion on a Lie algebroid and prove the existence of a one-to-one correspondence between hypersymplectic structures with torsion and hyperk\"{a}hler structures with torsion on a Lie algebroid (Theorem \ref{Thm_1-1_correspondence}). Section~\ref{sec5} contains three examples of hypersymplectic structures with torsion on $\mathbb R^8$, on $\mathfrak{su}(3)$ and on the tangent bundle of $S^3 \times (S^1)^5$, respectively. In Section~\ref{sec6} we start by recalling the definition of (para-)hypersymplectic structures on a pre-Courant algebroid.
Then, we concentrate on pre-Courant structures on the vector bundle $A \oplus A^*$, to study the Nijenhuis torsion of an endomorphism of $A \oplus A^*$ of type ${\mathcal D}_N= N \oplus (-N^*)$, with $N: A \to A$. Namely, we establish relations between the Nijenhuis torsion of ${\mathcal D}_N$ and the Nijenhuis torsion of $N$ (Propositions~\ref{prop_relation_T(JN)_and_T(N)} and \ref{torsion_TN_torsion_N}). In Proposition~\ref{prop_C_mu(Ti,Tj)=-2[I,J]_FN} we show how the Fr\"{o}licher-Nijenhuis bracket on a Lie algebroid can be seen in the pre-Courant algebroid setting.
Section~\ref{sec7} contains the main results of the paper. Given a (para-)hypersymplectic structure with torsion on a Lie algebroid, we prove that the transition morphisms are Nijenhuis (Theorem~\ref{Thm_PHST->Ni_Nijenhuis}) and pairwise compatible with respect to the Fr\"{o}licher-Nijenhuis bracket (Proposition~\ref{prop_PHST->[Ni,Ni]=0}), and that the twisted Poisson bivectors are compatible with respect to the Schouten-Nijenhuis bracket of the Lie algebroid (Proposition~\ref{prop_PHST->[pi_i,pi_j]=0}). The results of \cite{AC14a} are extensively used in the proofs of Section~\ref{sec7}.

Most of the computations along the paper are done using the big bracket~\cite{YKS92}. 
For further details on
Lie and pre-Courant algebroids in the supergeometric setting, see \cite{royContemp,roy,voronov,vaintrob}.

\

\noindent \emph{Notation}: We consider $1, 2$ and $3$ as the representative elements of the equivalence classes of $\mathbb{Z}_3$, i.e., $\mathbb{Z}_3:=\{[1],[2],[3]\}$. Along the paper, although we omit the brackets, and write $i$ instead of $[i]$, the indices are elements in $\mathbb{Z}_3:=\mathbb{Z}/{3\mathbb{Z}}$.

%
\section{Hypersymplectic structures with torsion}         
\label{sec1}                                              
%

Let $(A,\mu)$ be a  Lie algebroid and take three non-degenerate $2$-forms $\omega_1, \omega_2$ and $\omega_3 \in \Gamma(\wedge^2 A^*)$ with inverse  $\pi_1, \pi_2$ and $\pi_3 \in \Gamma(\wedge^2 A)$, respectively. We define the \textit{transition morphisms} $N_1,N_2$ and $N_3: A \to A$, by setting
\begin{equation}\label{def_Ni}
  N_i:=\pi_{i-1}^{\sharp}\circ\omega_{i+1}^{\flat}, \,\,\, i \in \mathbb{Z}_3.
\end{equation}
In~(\ref{def_Ni}), we consider the usual vector bundle maps $\pi^\# : A^* \to A$ and $\omega^\flat:A \to A^*$, associated to a bivector $\pi \in \Gamma(\bigwedge^2 A)$ and a $2$-form $\omega  \in \Gamma(\bigwedge^2 A^*)$, respectively, which are given by
$ \langle \beta, \pi^\# ( \alpha) \rangle= \pi(\alpha, \beta)$ and  $\langle \omega^\flat ( X), Y \rangle= \omega(X,Y)$,
for all  $\alpha, \beta \in \Gamma(A^*)$ and  $X, Y \in \Gamma(A)$.

In what follows we shall consider the parameters $\varepsilon_i=\pm 1, i=1,2,3,$ and the triplet {\large${\boldsymbol\varepsilon}$}$=(\varepsilon_1,\varepsilon_2,\varepsilon_3)$.

\begin{defn}\label{def_HST}
    A triplet $(\omega_1, \omega_2, \omega_3)$ of non-degenerate $2$-forms on a Lie algebroid $(A, \mu)$ is an \textit{{\large${\boldsymbol\varepsilon}$}-hypersymplectic structure with torsion} if
    \begin{equation}\label{eq_Ij_almostCPS}
        {N_i}^2=\varepsilon_i {\rm id}_{A}, \quad i=1,2,3, \quad \rm{and} \quad  \varepsilon_2 N_1 \d \omega_1=\varepsilon_3 N_2 \d \omega_2=\varepsilon_1 N_3 \d \omega_3,
    \end{equation}
    where $N_i$, $i=1,2,3$, is given by (\ref{def_Ni}) and $$N_i\d \omega_i(X,Y,Z):= \d \omega_i(N_i X, N_i Y, N_i Z),\textrm{ for all }X,Y,Z \in \Gamma(A).$$
\end{defn}

When the non-degenerate $2$-forms $\omega_1, \omega_2$ and $\omega_3$ are closed,  so that they are symplectic forms and $\varepsilon_2 N_1 \d \omega_1=\varepsilon_3 N_2 \d \omega_2=\varepsilon_1 N_3 \d \omega_3$ is trivially satisfied, the triplet $(\omega_1, \omega_2, \omega_3)$ is an {\large${\boldsymbol\varepsilon}$}-hypersymplectic structure  on $(A, \mu)$ \cite{AC13}.


Having an {\large $\boldsymbol\varepsilon$}-hypersymplectic structure with torsion $(\omega_1, \omega_2, \omega_3)$ on a
Lie algebroid $A$ over $M$,
we define a map
\begin{equation} \label{def_g}
g : A \times A \longrightarrow \mathbb{R},\quad \quad g(X,Y)=\langle g^\flat (X), Y\rangle,
\end{equation}
 where $g^\flat: A\longrightarrow A^*$ is the vector bundle morphism defined by
    \begin{equation}\label{first_defn _g}
    g^\flat:=\varepsilon_3\varepsilon_2\ {\omega_3}^{\flat} \circ {\pi_1}^{\sharp} \circ {\omega_2}^{\flat}.
    \end{equation}
The definition of $g^\flat$ is not affected by a circular permutation of the indices in~(\ref{first_defn _g}), that is,
\begin{equation} \label{second_defn_g}
g^\flat=\varepsilon_{i-1}\varepsilon_{i+1}\ {\omega_{i-1}}^{\flat} \circ {\pi_i}^{\sharp} \circ {\omega_{i+1}}^{\flat}, \,\, \, i \in \mathbb{Z}_3.
\end{equation}
Moreover,
\begin{equation}  \label{skew_or_not}
(g^\flat)^* = - \varepsilon_1\varepsilon_2\varepsilon_3\ g^\flat,
\end{equation}
which means that
 $g$ is symmetric or skew-symmetric, depending on the sign of $\varepsilon_1\varepsilon_2\varepsilon_3$. 
An important property of $g$ is the following one:
\begin{equation}
g(N_i X, N_i Y) = \varepsilon_{i-1}\varepsilon_{i+1}\ g(X,Y), \quad X, Y \in \Gamma(A).
\end{equation}
Notice that $g^\flat$ is invertible and, using its inverse, we may define a map \mbox{$g^{-1}: A^* \times A^* \longrightarrow \mathbb{R}$,} by setting
\begin{equation}  \label{g_inverse}
g^{-1}(\alpha,\beta):=\langle \beta, (g^\flat)^{-1}(\alpha) \rangle,
\end{equation}
for all $\alpha,\beta \in \Gamma(A^*)$.



\

All the algebraic properties of {\large${\boldsymbol\varepsilon}$}-hypersymplectic structures on Lie algebroids, proved in Propositions 3.7, 3.9 and 3.10 in \cite{AC13}, hold in the case of {\large${\boldsymbol\varepsilon}$}-hypersymplectic structures with torsion.
\section{The contravariant approach}        
\label{sec2}                                

In this section we give a contravariant characterization of an {$\Large\varepsilon$}-hypersymplectic structure with torsion on a Lie algebroid.

We shall need the next lemma, that can be easily proved.

\begin{lem}  \label{pi_omega_inverses}
Let $\omega$ be a non-degenerate $2$-form on a Lie algebroid $(A, \mu)$, with inverse $\pi$. Then,
\begin{equation} \label{pi_inverse omega}
\frac{1}{2}[\pi, \pi]= \d \omega \left(\pi^{\sharp}(\cdot), \pi^{\sharp}(\cdot) , \pi^{\sharp}(\cdot)\right)
\end{equation}
 or, equivalently,
\begin{equation} \label{omega_inverse pi}
\d \omega = \frac{1}{2} [\pi, \pi] \left(\omega^{\flat}(\cdot), \omega^{\flat}(\cdot), \omega^{\flat}(\cdot)\right),
\end{equation}
where $[\cdot,\cdot]$ stands for the Schouten-Nijenhuis bracket of multivectors on $(A, \mu)$.
\end{lem}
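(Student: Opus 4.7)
The plan is to first notice that (\ref{pi_inverse omega}) and (\ref{omega_inverse pi}) are formally equivalent. Since $\omega^\flat:A\to A^*$ and $\pi^\sharp:A^*\to A$ are mutually inverse vector bundle isomorphisms, substituting $\alpha_i=\omega^\flat(X_i)$ in (\ref{pi_inverse omega}) and using $\pi^\sharp\omega^\flat=\mathrm{id}_A$ produces (\ref{omega_inverse pi}), and the converse substitution $X_i=\pi^\sharp(\alpha_i)$ reverses the passage. So it suffices to prove one of them, say (\ref{pi_inverse omega}).

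To prove (\ref{pi_inverse omega}), I would evaluate both sides on three $1$-forms $\alpha,\beta,\gamma\in\Gamma(A^*)$. On the right, the Lie algebroid Cartan formula gives
\begin{equation*}
\d\omega(\pi^\sharp\alpha,\pi^\sharp\beta,\pi^\sharp\gamma)=\sum_{\mathrm{cyc}}\rho(\pi^\sharp\alpha)\,\omega(\pi^\sharp\beta,\pi^\sharp\gamma)-\sum_{\mathrm{cyc}}\omega\bigl([\pi^\sharp\alpha,\pi^\sharp\beta]_\mu,\pi^\sharp\gamma\bigr),
\end{equation*}
and the inversion identity $\omega^\flat\pi^\sharp=\mathrm{id}_{A^*}$ converts $\omega(\pi^\sharp\beta,\pi^\sharp\gamma)=\langle\omega^\flat\pi^\sharp\beta,\pi^\sharp\gamma\rangle=\langle\beta,\pi^\sharp\gamma\rangle$ into $-\pi(\beta,\gamma)$, and transforms the bracket term into $\langle\omega^\flat[\pi^\sharp\alpha,\pi^\sharp\beta]_\mu,\pi^\sharp\gamma\rangle$. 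The left-hand side is then expanded using the standard coordinate-free formula for the Schouten--Nijenhuis bracket of a bivector with itself on a Lie algebroid, which involves exactly the same ingredients --- $\rho(\pi^\sharp\alpha)\,\pi(\beta,\gamma)$, the Lie algebroid bracket $[\pi^\sharp\alpha,\pi^\sharp\beta]_\mu$, and Lie-derivative terms that can be traded against $\d\alpha,\d\beta,\d\gamma$ via $\lie_X\alpha=\iota_X\d\alpha+\d(\iota_X\alpha)$ --- so that term-by-term comparison yields the equality.

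The most efficient route, consistent with the supergeometric setup emphasized in the introduction, is to work with the big bracket $\bb{\cdot}{\cdot}$: there $\d\omega=\bb{\mu}{\omega}$ and $[\pi,\pi]=\bb{\bb{\mu}{\pi}}{\pi}$, and the inversion relation between $\omega$ and $\pi$ becomes a purely algebraic contraction identity between them. The identity (\ref{pi_inverse omega}) then drops out of the graded Jacobi identity for the big bracket applied to $\mu$, $\pi$, and $\omega$. In either approach, the only real obstacle is careful sign- and normalization-tracking (the factor $\tfrac12$, the cyclic sums, and the sign conventions relating $\omega^\flat$ and $\pi^\sharp$ to their defining pairings); no conceptual difficulty is involved.
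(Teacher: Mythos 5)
Your proposal is correct, and in fact the paper offers no written proof of this lemma at all (the authors dismiss it with ``can be easily proved''), so the routine verification you outline is exactly the kind of argument intended. The reduction of~(\ref{omega_inverse pi}) to~(\ref{pi_inverse omega}) via the substitution $\alpha_i=\omega^\flat(X_i)$ and $\pi^\sharp\circ\omega^\flat=\mathrm{id}_A$ is right, and the direct route — Cartan's formula for $\d\omega$ on the algebroid, the paper's pairing conventions giving $\langle\beta,\pi^\sharp\gamma\rangle=-\pi(\beta,\gamma)$, and the standard expression of $\tfrac12[\pi,\pi](\alpha,\beta,\gamma)$ through $\rho(\pi^\sharp\alpha)\pi(\beta,\gamma)$ and $\langle\gamma,[\pi^\sharp\alpha,\pi^\sharp\beta]_\mu-\pi^\sharp[\alpha,\beta]_\pi\rangle$ — does close up term by term, the only care needed being the sign conventions you already flag. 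One small caution on your ``most efficient'' big-bracket variant: the identity does not drop out of a single application of the graded Jacobi identity to $\mu,\pi,\omega$; you must first encode the inversion as the contraction identity $\bb{\pi}{\omega}=\pm\,\mathrm{id}_A$ and express the right-hand side of~(\ref{pi_inverse omega}) as an iterated contraction, schematically $\tfrac16\bb{\pi}{\bb{\pi}{\bb{\pi}{\bb{\mu}{\omega}}}}$, after which repeated use of Jacobi together with $\bb{\pi}{\omega}=\pm\,\mathrm{id}_A$ yields $\tfrac12\bb{\bb{\mu}{\pi}}{\pi}$; the combinatorial factors there are easy to get wrong, so the elementary Cartan-formula computation is the safer of your two routes. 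Neither route has a conceptual gap.
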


Equation (\ref{pi_inverse omega}) means that $\pi$ is a \emph{twisted Poisson} bivector on $(A, \mu)$, also known as Poisson bivector with the $3$-form background ${\rm d}\omega$ \cite{sw01}.

Let $(\omega_1, \omega_2, \omega_3)$ be an {$\Large\varepsilon$}-hypersymplectic structure with torsion on $(A, \mu)$ and let us denote by $H$ the $3$-form
$$H:=\varepsilon_2 N_1 \d \omega_1=\varepsilon_3 N_2 \d \omega_2= \varepsilon_1 N_3 \d \omega_3.$$
So, for all $X,Y,Z \in \Gamma(A)$,
\begin{equation} \label{def_H}
H(X,Y,Z)= \varepsilon_{i+1} \d \omega_i(N_i X, N_i Y, N_i Z), \,\,\, i=1,2,3,
\end{equation}
or, equivalently,
\begin{equation} \label{2nd_def_H}
\d \omega_i(X,Y,Z)=\varepsilon_{i}\varepsilon_{i+1} H(N_i X, N_i Y, N_i Z), \,\,\, i=1,2,3.
\end{equation}

Now we prove the main result of this section, that gives a characterization of {$\Large\varepsilon$}-hypersymplectic structures with torsion on a Lie algebroid.

\begin{thm}\label{thm_HST_equivalent_contravariant_form}
A triplet $(\omega_1, \omega_2, \omega_3)$ of non-degenerate $2$-forms on a Lie algebroid  $(A, \mu)$ with inverses $\pi_1, \pi_2$ and $\pi_3$, respectively, is an {$\Large\varepsilon$}-hypersymplectic structure with torsion on $A$ if and only if
\begin{equation*}
        {N_i}^2=\varepsilon_i {\rm id}_{A}, \quad i=1,2,3, \quad \rm{and} \quad \varepsilon_1[\pi_1, \pi_1]=  \varepsilon_2 [\pi_2, \pi_2]= \varepsilon_3 [\pi_3, \pi_3],
        \end{equation*}
        with $N_i$ given by (\ref{def_Ni}).
\end{thm}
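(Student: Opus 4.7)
Since the conditions $N_i^2=\varepsilon_i\,\mathrm{id}_A$ appear on both sides of the biconditional, it suffices to show that, under these, the torsion identities $\varepsilon_{i+1}N_i\d\omega_i=\varepsilon_{i+2}N_{i+1}\d\omega_{i+1}$ and the bivector identities $\varepsilon_i[\pi_i,\pi_i]=\varepsilon_{i+1}[\pi_{i+1},\pi_{i+1}]$ are equivalent for each $i\in\mathbb{Z}_3$. The two tools are Lemma \ref{pi_omega_inverses}, which moves between $\d\omega_j$ and $\tfrac12[\pi_j,\pi_j]$ via the musical isomorphism $\omega_j^\flat$, and a short list of identities derived from $N_j=\pi_{j-1}^\sharp\omega_{j+1}^\flat$ and $N_j^2=\varepsilon_j\,\mathrm{id}$: the tautological $\pi_{j+1}^\sharp\omega_j^\flat=N_{j-1}$; the inversion identity $\pi_{j-1}^\sharp\omega_j^\flat=\varepsilon_{j+1}N_{j+1}$; and consequently $N_jN_{j-1}=\varepsilon_{j+1}N_{j+1}$, $N_{j+1}N_j=\varepsilon_{j-1}N_{j-1}$, and $N_{j-1}N_j=\varepsilon_1\varepsilon_2\varepsilon_3\,\varepsilon_{j+1}N_{j+1}$.

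For the forward direction, evaluate $\varepsilon_i[\pi_i,\pi_i]=\varepsilon_{i+1}[\pi_{i+1},\pi_{i+1}]$ at $(\omega_i^\flat X,\omega_i^\flat Y,\omega_i^\flat Z)$. By Lemma \ref{pi_omega_inverses}, the left side becomes $2\varepsilon_i\d\omega_i(X,Y,Z)$, and the right side becomes $2\varepsilon_{i+1}\d\omega_{i+1}(\pi_{i+1}^\sharp\omega_i^\flat X,\dots)=2\varepsilon_{i+1}N_{i-1}\d\omega_{i+1}(X,Y,Z)$, using $\pi_{i+1}^\sharp\omega_i^\flat=N_{i-1}$. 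This produces the intermediate identity
\[
(\ast)\qquad\varepsilon_i\d\omega_i=\varepsilon_{i+1}N_{i-1}\d\omega_{i+1}.
\]
Precomposing each of the three slots of $(\ast)$ with $N_i$ yields $\d\omega_{i+1}(N_{i-1}N_iX,\dots)$; since $N_{i-1}N_i=\varepsilon_1\varepsilon_2\varepsilon_3\,\varepsilon_{i+1}N_{i+1}$ and the cube of $\pm 1$ is itself, this equals $\varepsilon_1\varepsilon_2\varepsilon_3\,\varepsilon_{i+1}N_{i+1}\d\omega_{i+1}$. Multiplying by $\varepsilon_{i+1}$ on both sides and using $\varepsilon_i\varepsilon_{i+1}\cdot\varepsilon_1\varepsilon_2\varepsilon_3=\varepsilon_{i-1}=\varepsilon_{i+2}$ collapses the signs to give $\varepsilon_{i+1}N_i\d\omega_i=\varepsilon_{i+2}N_{i+1}\d\omega_{i+1}$.

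The converse is the same chain run in reverse: precomposing the torsion identity with $N_i$ and using $N_i^2=\varepsilon_i\,\mathrm{id}$ on the left together with $N_{i+1}N_i=\varepsilon_{i-1}N_{i-1}$ on the right recovers $(\ast)$, and then Lemma \ref{pi_omega_inverses} combined with the bijectivity of $(\omega_i^\flat)^{\otimes 3}\colon\wedge^3 A\to\wedge^3 A^*$ returns the bivector identity. The main difficulty throughout is purely bookkeeping: the $\varepsilon$-signs must be tracked through cyclic shifts of the index, through compositions of distinct $N_j$'s, and through the cube that appears when a factor $\pm 1$ is pulled out of each slot of a $3$-form. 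Once the short list of algebraic identities above is tabulated, no further computation is needed.
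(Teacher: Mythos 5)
Your argument is correct: the identities you derive from $N_j=\pi_{j-1}^\sharp\circ\omega_{j+1}^\flat$ and $N_j^2=\varepsilon_j\,\mathrm{id}_A$ (namely $\pi_{j+1}^\sharp\circ\omega_j^\flat=N_{j-1}$, $\pi_{j-1}^\sharp\circ\omega_j^\flat=\varepsilon_{j+1}N_{j+1}$, $N_jN_{j-1}=\varepsilon_{j+1}N_{j+1}$, $N_{j+1}N_j=\varepsilon_{j-1}N_{j-1}$ and $N_{j-1}N_j=\varepsilon_{j-1}\varepsilon_jN_{j+1}=\varepsilon_1\varepsilon_2\varepsilon_3\,\varepsilon_{j+1}N_{j+1}$) do hold under the shared hypothesis $N_i^2=\varepsilon_i\,\mathrm{id}_A$, and the sign bookkeeping in both directions checks out (the only slip is verbal: to pass from $\varepsilon_iN_i\d\omega_i=\varepsilon_1\varepsilon_2\varepsilon_3N_{i+1}\d\omega_{i+1}$ to the stated conclusion you multiply by $\varepsilon_i\varepsilon_{i+1}$, not by $\varepsilon_{i+1}$, which is what your displayed sign identity actually uses; also what you call the ``forward'' direction is the ``if'' part of the theorem, harmlessly). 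Your route differs from the paper's in the pivot object: the paper does not argue pairwise but shows that each $\varepsilon_i[\pi_i,\pi_i]$, respectively each $\varepsilon_{i+1}N_i\d\omega_i$, equals a single $i$-independent trilinear expression, namely $2\varepsilon_1\varepsilon_2\varepsilon_3\,H\bigl((g^{-1})^{\sharp}(\cdot),(g^{-1})^{\sharp}(\cdot),(g^{-1})^{\sharp}(\cdot)\bigr)$, respectively $\varepsilon_1\varepsilon_2\varepsilon_3\,\psi\bigl(g^{\flat}(\cdot),g^{\flat}(\cdot),g^{\flat}(\cdot)\bigr)$, using Lemma \ref{pi_omega_inverses} together with the identities $N_i\circ\pi_i^{\sharp}=\varepsilon_{i-1}\varepsilon_i(g^{-1})^{\sharp}$ and $\omega_i^\flat\circ N_i=\varepsilon_{i-1}g^\flat$ imported from \cite{AC13}. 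That version is shorter given those imported formulas and makes transparent that the three twisted Poisson bivectors share (up to sign) one obstruction $\psi$, a fact exploited in the remark following the theorem and in Section \ref{sec3}; your version is more self-contained, avoiding $g$ and the external references entirely at the cost of tabulating the composition identities among the $N_j$'s and tracking the cyclic $\varepsilon$-signs by hand. Both rest on the same two moves (Lemma \ref{pi_omega_inverses} plus transport through the musical isomorphisms), so the difference is one of organization rather than substance, but your variant is a legitimate and complete proof.
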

\begin{proof}

Assume that $\varepsilon_2 N_1 \d \omega_1=\varepsilon_3 N_2 \d \omega_2=\varepsilon_1 N_3 \d \omega_3$. 
Using (\ref{pi_inverse omega}), (\ref{2nd_def_H}) and formula $N_i \circ \pi_i^{\sharp}= \varepsilon_{i-1}\varepsilon_{i} (g^{-1})^{\sharp}$ (see~\cite{AC13}), we have
\begin{eqnarray*}
\varepsilon_i[\pi_i,\pi_i]&=& 2 \varepsilon_i \d \omega_i \left(\pi_i^{\sharp}(\cdot), \pi_i^{\sharp}(\cdot), \pi_i^{\sharp}(\cdot)\right) \\
&=& 2 \varepsilon_1 \varepsilon_2 \varepsilon_3 H\left( (g^{-1})^{\sharp}(\cdot), (g^{-1})^{\sharp}(\cdot), (g^{-1})^{\sharp}(\cdot)\right),
\end{eqnarray*}
for $i \in \mathbb{Z}_3$; thus,
$$ \varepsilon_1 [\pi_1, \pi_1]= \varepsilon_2 [\pi_2, \pi_2]= \varepsilon_3 [\pi_3, \pi_3].$$

Conversely, assume that $ \varepsilon_1 [\pi_1, \pi_1]=  \varepsilon_2 [\pi_2, \pi_2]= \varepsilon_3 [\pi_3, \pi_3]$ and let us set $\psi:=\frac{1}{2}\varepsilon_i [\pi_i, \pi_i]$. Then, using (\ref{omega_inverse pi}) and formula $\omega_i^\flat\circ N_i= \varepsilon_{i-1} g^{\flat}$ (see~\cite{AC13}), we get
\begin{eqnarray*}
\varepsilon_{i+1}\d \omega_i \left(N_i(\cdot), N_i(\cdot), N_i(\cdot)\right)&=& \frac{1}{2}\varepsilon_{i+1} [\pi_i,\pi_i] \left(\omega_i ^{\flat} \circ N_i(\cdot), \omega_i ^{\flat} \circ N_i(\cdot), \omega_i ^{\flat} \circ N_i(\cdot)\right)\\
&=& \varepsilon_1 \varepsilon_2 \varepsilon_3 \psi \left(g^{\flat}(\cdot), g^{\flat}(\cdot), g^{\flat}(\cdot)\right).
\end{eqnarray*}
Therefore, we conclude
$$\varepsilon_2 N_1 \d \omega_1=\varepsilon_3 N_2 \d \omega_2=\varepsilon_1 N_3 \d \omega_3.$$
\end{proof}

\begin{rem}
When there is an {$\Large\varepsilon$}-hypersymplectic structure with torsion on a Lie algebroid $(A, \mu)$, this Lie algebroid is equipped with a triplet $(\pi_1, \pi_2, \pi_3)$ of twisted Poisson bivectors that share, eventually up to a sign, the same obstruction to be Poisson. This obstruction is denoted by $\psi$ in the proof of Theorem \ref{thm_HST_equivalent_contravariant_form}.
\end{rem}

%
\section{Structures induced on the base manifold}
\label{sec3}
%

It is well known that a symplectic structure $\omega$ on a Lie algebroid $A \to M$ induces a Poisson structure on the base manifold $M$. The Poisson bivector $\pi_{ _{M}}$ on $M$ is defined by
\begin{equation} \label{pi_M}
\pi_{ _{M}}^\sharp= \rho \circ \pi^\sharp \circ \rho^*,
 \end{equation}
 where $\rho$ is the anchor map and $\pi \in \Gamma(\wedge^2 A)$ is the Poisson bivector on $A$ which is the inverse of $\omega$.  In the case where $\omega$ is non-degenerate but not necessarily closed, so that $\pi$ defines a Poisson structure with the $3$-form background $d \omega$ on the Lie algebroid $A$ (see (\ref{pi_inverse omega})), the base manifold $M$ has an induced Poisson structure with a $3$-form background, provided that $d \omega \in \Gamma(\wedge^3 A^*)$ is the pull-back by $\rho$ of a closed $3$-form $\phi_{ _{M}} \in \Omega^3(M)$. In fact, if $d \omega = \rho^*(\phi_{ _{M}})$ then, a straightforward computation gives
 \begin{equation*}
[\pi, \pi]= 2 d \omega  \left(\pi^{\sharp}(\cdot), \pi^{\sharp}(\cdot), \pi^{\sharp}(\cdot)\right)
\end{equation*}
which implies that
\begin{equation*}
[\pi_{ _{M}}, \pi_{ _{M}}]_{ _{M}}= 2 \phi_{ _{M}} \left(\pi_{ _{M}}^{\sharp}(\cdot), \pi_{ _{M}}^{\sharp}(\cdot), \pi_{ _{M}}^{\sharp}(\cdot)\right),
\end{equation*}
where $\pi_{ _{M}}$ is given by (\ref{pi_M}) and $[\cdot,\cdot]_{ _{M}}$ stands for the Schouten-Nijenhuis bracket of multivectors on $M$.

The next proposition is now immediate.

\begin{prop} \label{twisted_on_M}
Let $(\omega_1, \omega_2, \omega_3)$ be an {$\Large\varepsilon$}-hypersymplectic structure with torsion on a Lie algebroid  $(A, \mu)$ over $M$ with inverses $\pi_1, \pi_2$ and $\pi_3$, respectively. Let $\rho$ be the anchor map of $A$. If $d \omega_i= \rho^*(\phi_{ _{M}}^i)$, $i=1,2,3$, with $\phi_{ _{M}}^i$ a closed $3$-form on the manifold $M$, then $M$ is equipped with three twisted Poisson bivectors $\pi_{ _{M}}^1, \pi_{ _{M}}^2$ and $\pi_{ _{M}}^3$, defined by (\ref{pi_M}); the $3$-forms background are $ \phi_{ _{M}}^1$, $\phi_{ _{M}}^2$ and  $ \phi_{ _{M}}^3$, respectively. Moreover, the three bivectors on $M$ share, eventually up to a sign, the same obstruction to be Poisson.
\end{prop}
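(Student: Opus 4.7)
The plan is to reduce the statement to two ingredients already available in the text: the single-form twisted Poisson calculation displayed in the discussion preceding the proposition, and the standard fact that the anchor of a Lie algebroid intertwines Schouten-Nijenhuis brackets. Everything then follows by three parallel arguments indexed by $i \in \{1,2,3\}$.

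First I would verify, for each fixed $i$, that $\pi^i_{M}$ is twisted Poisson on $M$ with background $\phi^i_{M}$. This is essentially a transcription of the single-form computation shown just before the proposition: starting from Lemma \ref{pi_omega_inverses}, I substitute $\d\omega_i = \rho^*(\phi^i_{M})$, evaluate on covectors of the form $\rho^*\bar\alpha$, and use the identity $\rho \circ \pi_i^\sharp \circ \rho^* = (\pi^i_{M})^\sharp$ coming from (\ref{pi_M}); the $\rho^*$'s are absorbed by the sharp maps, yielding
\[
[\pi^i_{M}, \pi^i_{M}]_{M} \;=\; 2\, \phi^i_{M}\bigl((\pi^i_{M})^\sharp(\cdot), (\pi^i_{M})^\sharp(\cdot), (\pi^i_{M})^\sharp(\cdot)\bigr).
\]
Since $\phi^i_{M}$ is closed, this is precisely the twisted Poisson condition on $M$ with $3$-form background $\phi^i_{M}$.

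Next I would establish the ``common obstruction'' statement. By Theorem \ref{thm_HST_equivalent_contravariant_form}, the $3$-vector $2\psi := \varepsilon_i [\pi_i,\pi_i]$ on $A$ is independent of $i$. Since $\rho: A \to TM$ is the anchor of the Lie algebroid $(A,\mu)$, its wedge extension $\wedge^\bullet \rho$ is a morphism of Gerstenhaber algebras, so $\wedge^3 \rho\, [\pi_i,\pi_i] = [\pi^i_{M},\pi^i_{M}]_{M}$ for each $i$. Applying $\wedge^3 \rho$ to the chain $\varepsilon_1[\pi_1,\pi_1] = \varepsilon_2[\pi_2,\pi_2] = \varepsilon_3[\pi_3,\pi_3]$ therefore gives
\[
\varepsilon_1[\pi^1_{M},\pi^1_{M}]_{M} \;=\; \varepsilon_2[\pi^2_{M},\pi^2_{M}]_{M} \;=\; \varepsilon_3[\pi^3_{M},\pi^3_{M}]_{M} \;=\; 2\, \wedge^3\rho(\psi),
\]
so the three bivectors on $M$ share, up to the signs $\varepsilon_i$, the common obstruction $\wedge^3\rho(\psi)$ to being Poisson.

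The only real hurdle is the bookkeeping in the first step, namely making explicit that the $\rho^*$ factors introduced by $\d\omega_i = \rho^*(\phi^i_{M})$ get consumed by the sharp maps, converting $\rho^*(\phi^i_{M})(\pi_i^\sharp(\cdot),\pi_i^\sharp(\cdot),\pi_i^\sharp(\cdot))$ into the corresponding expression on $M$. This is precisely why the pullback hypothesis is needed; once it is dispatched, the two statements of the proposition follow as the paper signals, ``immediately''.
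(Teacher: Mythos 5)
Your proposal is correct and follows essentially the same route as the paper, which declares the proposition ``immediate'' from the single-form computation preceding it (applied to each $i$) together with the relation $\varepsilon_1[\pi_1,\pi_1]=\varepsilon_2[\pi_2,\pi_2]=\varepsilon_3[\pi_3,\pi_3]$ from Theorem~\ref{thm_HST_equivalent_contravariant_form}. You merely make explicit the pushforward step the paper leaves implicit, namely that the anchor induces a morphism of Schouten brackets, so that $\wedge^3\rho[\pi_i,\pi_i]=[\pi^i_{M},\pi^i_{M}]_{M}$.
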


We should stress that if $\omega_i=\rho^*(\omega_{ _{M}}^i)$ for some $2$-forms $\omega_{ _{M}}^i$ on $M$, $i=1,2,3$, then $d \omega_i = \rho^*(\delta \omega_{ _{M}}^i)$ with $\delta$ the De Rham differential on $M$. So,  the assumptions of
 Proposition~\ref{twisted_on_M} are satisfied and $$[\pi_{ _{M}}^i, \pi_{ _{M}}^i]_{ _{M}}= 2 \delta \omega_{ _{M}}^i  \left((\pi_{ _{M}}^i)^{\sharp}(\cdot), (\pi_{ _{M}}^i)^{\sharp}(\cdot), (\pi_{ _{M}}^i)^{\sharp}(\cdot)\right),$$ $i=1,2,3$. However, although the bivectors $\pi_{ _{M}}^1, \pi_{ _{M}}^2$ and $\pi_{ _{M}}^3$ on $M$ satisfy $\varepsilon_1[\pi_{ _{M}}^1, \pi_{ _{M}}^1]_{ _{M}}=  \varepsilon_2 [\pi_{ _{M}}^2, \pi_{ _{M}}^2]_{ _{M}}= \varepsilon_3 [\pi_{ _{M}}^3, \pi_{ _{M}}^3]_{ _{M}}$,  in general, the triplet $(\omega_{ _{M}}^1,\omega_{ _{M}}^2,\omega_{ _{M}}^3)$ does not define an {\large$\boldsymbol{\varepsilon}$}-hypersymplectic structure with torsion on $M$, because $\omega_{ _{M}}^i$ is not the inverse of $\pi_{ _{M}}^i$.

%
\section{Hypersymplectic structures with torsion versus hyperk\"{a}hler structures with torsion}
\label{sec4}
%

In this section we consider {\large$\boldsymbol{\varepsilon}$}-hypersymplectic structures with torsion such that $\varepsilon_1\varepsilon_2\varepsilon_3=-1$ and we prove that these structures are in one-to-one correspondence with (para-)hyperk\"{a}hler structures with torsion, a notion we shall define later. First, let us consider two different cases of an {\large$\boldsymbol{\varepsilon}$}-hypersymplectic structure with torsion such that $\varepsilon_1\varepsilon_2\varepsilon_3=-1$.

\begin{defn}\cite{AC14a}
    Let $(\omega_1, \omega_2, \omega_3)$ be an {\large$\boldsymbol{\varepsilon}$}-hypersymplectic structure with torsion on a Lie algebroid $(A, \mu)$, such that $\varepsilon_1\varepsilon_2\varepsilon_3=-1$.
    \begin{itemize}
        \item If $\varepsilon_1=\varepsilon_2=\varepsilon_3=-1$, then $(\omega_1, \omega_2, \omega_3)$ is said to be a \emph{hypersymplectic} structure with torsion on $A$.
        \item Otherwise, $(\omega_1, \omega_2, \omega_3)$ is said to be a \emph{para-hypersymplectic} structure with torsion on $A$.
    \end{itemize}
\end{defn}


%

For a (para-)hypersymplectic structure with torsion on a Lie algebroid $(A, \mu)$, the morphism $g^\flat$ defined by (\ref{first_defn _g}) determines a pseudo-metric on $(A, \mu)$ \cite{AC13}.

\

Hyperk\"{a}hler structures with torsion on manifolds were introduced in \cite{HP96} and studied in \cite{GP00} and in \cite{CS08}. The definition extends to the Lie algebroid setting in a natural way.
Let $(A, \mu)$ be a Lie algebroid and consider a map $\mathtt{g} : A \times A \to \mathbb{R}$ and endomorphisms $I_1, I_2, I_3:A\to A$ such that, for all $i\in \mathbb{Z}_3$ and $X,Y \in \Gamma(A)$,
\begin{align}
    i)&\ \mathtt{g}\text{ is a pseudo-metric};\nonumber \\
    ii)&\ I_i^2=\varepsilon_i {\rm id}_A, \quad \text{ where }\varepsilon_i=\pm 1\text{ and }\varepsilon_1\varepsilon_2\varepsilon_3=-1;\nonumber\\
    iii)&\ I_3=\varepsilon_1\varepsilon_2 I_1 \circ I_2;\label{HKT_axioms}\\
    iv)&\ \mathtt{g}(I_i X, I_i Y) =  \varepsilon_{i-1}\varepsilon_{i+1}\mathtt{g}(X,Y);\nonumber\\
    v)&\ \varepsilon_2 I_1 \d \omega_1=\varepsilon_3 I_2 \d \omega_2=\varepsilon_1 I_3 \d \omega_3,\nonumber
\end{align}
where the $2$-forms $\omega_i, i=1,2,3$, which are called the \emph{K\"{a}hler forms}, are defined by
\begin{equation}\label{def_Kahler_forms}
\omega_i^\flat= \varepsilon_{i}\varepsilon_{i-1}\mathtt{g}^\flat \circ I_i.
\end{equation}

\begin{defn} \label{HKT_definition}
    A quadruple $(\mathtt{g}, I_1, I_2, I_3)$ satisfying (\ref{HKT_axioms})$i)-v)$, on a Lie algebroid $(A, \mu)$, is a
   \begin{itemize}
        \item \emph{hyperk\"{a}hler structure with torsion} on $A$ if $\varepsilon_1=\varepsilon_2=\varepsilon_3=-1$;
        \item \emph{para-hyperk\"{a}hler structure with torsion} on $A$, otherwise.
    \end{itemize}
\end{defn}

\begin{rem}
    Most authors consider that a (para-)hyperk\"{a}hler structure, with or without torsion, is equipped with a positive definite metric, while for us $\mathtt{g} : A \times A \to \mathbb{R}$ is a pseudo-metric, i.e., it is symmetric and non-degenerate.
\end{rem}

Note that, because $\varepsilon_1\varepsilon_2\varepsilon_3=-1$, on a (para-)hyperk\"{a}hler structure with torsion $(\mathtt{g}, I_1, I_2, I_3)$, we always have $I_i\circ I_j=-I_j\circ I_i$, for all $i, j\in \{1,2,3\},i\neq j$ (see Proposition 3.9 in~\cite{AC13}).

The next lemma establishes a relation between the pseudo-metric and the K\"{a}hler forms of a (para-)hyperk\"{a}hler structure with torsion on a Lie algebroid. Namely, it is shown that the pseudo-metric satisfies an equation similar to (\ref{second_defn_g}).

\begin{lem} \label{g_and Kahler}
Let $(\mathtt{g}, I_1, I_2, I_3)$ be a (para-)hyperk\"{a}hler structure with torsion on a Lie algebroid with associated K\"{a}hler forms $\omega_1$, $\omega_2$ and $\omega_3$. Then,
$$\mathtt{g}^\flat= \varepsilon_{i-1}\varepsilon_{i+1}\,{\omega_{i-1}}^{\flat} \circ {\pi_i}^{\sharp} \circ {\omega_{i+1}}^{\flat}, \,\, \, i \in \mathbb{Z}_3,$$
where $\pi_i$ is the inverse of $\omega_i$.
\end{lem}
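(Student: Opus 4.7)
The plan is to reduce the claimed identity to the single algebraic relation $I_{i-1}\circ I_i\circ I_{i+1}=-\mathrm{id}_A$ by substituting the definition of the Kähler forms and inverting. First I would note that from $\omega_i^\flat=\varepsilon_i\varepsilon_{i-1}\mathtt{g}^\flat\circ I_i$ in (\ref{def_Kahler_forms}), together with $I_i^2=\varepsilon_i\mathrm{id}_A$ (so $I_i^{-1}=\varepsilon_i I_i$), I can invert to obtain
\begin{equation*}
\pi_i^\sharp=(\omega_i^\flat)^{-1}=\varepsilon_{i-1}\, I_i\circ (\mathtt{g}^\flat)^{-1}.
\end{equation*}
Substituting this together with $\omega_{i-1}^\flat=\varepsilon_{i-1}\varepsilon_{i+1}\,\mathtt{g}^\flat\circ I_{i-1}$ and $\omega_{i+1}^\flat=\varepsilon_{i+1}\varepsilon_i\,\mathtt{g}^\flat\circ I_{i+1}$ into the right-hand side of the claim, the middle factor $(\mathtt{g}^\flat)^{-1}\circ \mathtt{g}^\flat$ collapses to $\mathrm{id}_A$, and careful bookkeeping of the $\varepsilon$-factors gives the prefactor $\varepsilon_{i-1}\varepsilon_i\varepsilon_{i+1}=\varepsilon_1\varepsilon_2\varepsilon_3=-1$.

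So the identity reduces to showing that $-\mathtt{g}^\flat\circ I_{i-1}\circ I_i\circ I_{i+1}=\mathtt{g}^\flat$, i.e.\ that $I_{i-1}\circ I_i\circ I_{i+1}=-\mathrm{id}_A$ for each $i\in\mathbb{Z}_3$. I would prove this using only the axioms $I_3=\varepsilon_1\varepsilon_2\, I_1\circ I_2$, $I_i^2=\varepsilon_i\mathrm{id}_A$, and pairwise anticommutativity $I_i\circ I_j=-I_j\circ I_i$ (noted in the paper right after Definition~\ref{HKT_definition}). For $i=1$ one replaces $I_1\circ I_2$ by $\varepsilon_1\varepsilon_2\, I_3$ and uses $I_3^2=\varepsilon_3\mathrm{id}_A$ to get $\varepsilon_1\varepsilon_2\varepsilon_3\mathrm{id}_A=-\mathrm{id}_A$; the cases $i=2$ and $i=3$ are analogous after one anticommutation. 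Combined with the $-1$ sign collected above, the final result is $\mathtt{g}^\flat$, as required.

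The argument is really just bookkeeping: there is no genuine obstacle, only the risk of sign errors while shuffling the $\varepsilon_j$'s and the factors $I_{i\pm 1}$. The cleanest presentation will treat the three cases uniformly by stating $I_{i-1}\circ I_i\circ I_{i+1}=-\mathrm{id}_A$ as a separate short sublemma, so that the main computation becomes a single line of composed morphisms with all signs collected at the end.
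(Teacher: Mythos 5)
Your proposal is correct and follows essentially the same route as the paper: both invert the defining relation $\omega_i^\flat=\varepsilon_i\varepsilon_{i-1}\mathtt{g}^\flat\circ I_i$ using $I_i^{-1}=\varepsilon_i I_i$ and reduce the claim to the quaternion-type algebra of the $I_i$ (the paper does this by showing $I_1=\varepsilon_1\,\pi_2^\sharp\circ\omega_3^\flat$ for a single index, you by the equivalent identity $I_{i-1}\circ I_i\circ I_{i+1}=-\mathrm{id}_A$ treated uniformly in $i$). The sign bookkeeping you outline checks out, so this is only a presentational reorganization of the paper's argument.
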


\begin{proof}
It is enough to prove that $\mathtt{g}^\flat=\varepsilon_1\varepsilon_3 {\omega_{1}}^{\flat} \circ {\pi_2}^{\sharp} \circ {\omega_{3}}^{\flat}$.  From (\ref{def_Kahler_forms}) we have, on one hand,
\begin{equation} \label{index3}
\mathtt{g}^\flat= \varepsilon_2 \omega_3^{\flat}\circ I_3= \varepsilon_1 \omega_3^{\flat} \circ I_1 \circ I_2
\end{equation}
and, on the other hand,
\begin{equation} \label{index2}
\mathtt{g}^\flat= \varepsilon_1 \omega_2^{\flat} \circ I_2.
\end{equation}
From (\ref{index3}) and (\ref{index2}), we get
\begin{equation*}
\omega_3^{\flat} \circ I_1=\omega_2^{\flat}\, \Leftrightarrow \, I_1= \pi_3^{\sharp} \circ \omega_2^{\flat} \, \Leftrightarrow \, I_1= \varepsilon_1 \pi_2^{\sharp} \circ \omega_3^{\flat},
\end{equation*}
where we used $(I_1)^{-1}=\varepsilon_1 I_1$,
and so,  $\mathtt{g}^\flat= \varepsilon_3 \omega_1^{\flat}\circ I_1$ yields
$$\mathtt{g}^\flat= \varepsilon_1\varepsilon_3{\omega_{1}}^{\flat} \circ {\pi_2}^{\sharp} \circ {\omega_{3}}^{\flat}.$$
\end{proof}

At this point, we shall see that  (para-)hypersymplectic structures with torsion and (para-)hyperk\"{a}hler structures with torsion on a Lie algebroid are in one-to-one correspondence.

\begin{thm}\label{Thm_1-1_correspondence}
    Let $(A, \mu)$ be a Lie algebroid.
There exists a one-to-one correspondence between  (para-)hypersymplectic structures with torsion and (para-)hyperk\"{a}hler structures with torsion on $(A, \mu)$.
\end{thm}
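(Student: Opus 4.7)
The plan is to exhibit two mutually inverse maps between (para-)hypersymplectic structures with torsion and (para-)hyperk\"{a}hler structures with torsion on $(A, \mu)$. In the forward direction, to $(\omega_1, \omega_2, \omega_3)$ I would associate $(g, N_1, N_2, N_3)$, with $g$ the pseudo-metric defined by (\ref{first_defn _g}) and $N_i$ the transition morphisms (\ref{def_Ni}). Four of the five axioms in (\ref{HKT_axioms}) are already in place: (ii) is part of Definition \ref{def_HST}; (iv) is the identity $g(N_i X, N_i Y) = \varepsilon_{i-1}\varepsilon_{i+1}\, g(X,Y)$ recorded in Section \ref{sec1}; (v) is the torsion condition in (\ref{eq_Ij_almostCPS}); and (i), the pseudo-metric axiom, follows from (\ref{skew_or_not}) together with the assumption $\varepsilon_1\varepsilon_2\varepsilon_3 = -1$ (which makes $g^\flat$ self-adjoint) plus invertibility of $g^\flat$. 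Axiom (iii), $N_3 = \varepsilon_1 \varepsilon_2 N_1 \circ N_2$, is one of the algebraic identities from \cite{AC13} invoked at the end of Section \ref{sec1}. Inverting $\omega_i^\flat \circ N_i = \varepsilon_{i-1}\, g^\flat$ (also from \cite{AC13}) via $N_i^{-1} = \varepsilon_i N_i$ then shows that each $\omega_i$ is the K\"{a}hler form of $(g, N_1, N_2, N_3)$ prescribed by (\ref{def_Kahler_forms}).

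In the opposite direction, starting from $(\mathtt{g}, I_1, I_2, I_3)$, I would take the triplet of K\"{a}hler forms $\omega_i^\flat := \varepsilon_i \varepsilon_{i-1}\, \mathtt{g}^\flat \circ I_i$. Non-degeneracy is automatic, while skew-symmetry of each $\omega_i$ follows from axiom (iv) combined with $I_i^{-1} = \varepsilon_i I_i$ and $\varepsilon_1\varepsilon_2\varepsilon_3 = -1$, which together yield $\mathtt{g}(X, I_i Y) = -\mathtt{g}(I_i X, Y)$. The crucial step is to show that the transition morphism $N_i := \pi_{i-1}^\sharp \circ \omega_{i+1}^\flat$ built from these $\omega_j$ coincides with $I_i$: inverting $\omega_i^\flat$ gives $\pi_i^\sharp = \varepsilon_{i-1}\, I_i \circ (\mathtt{g}^\flat)^{-1}$, hence $N_i = \varepsilon_i\, I_{i-1} \circ I_{i+1}$ (using $\varepsilon_{i-2} = \varepsilon_{i+1}$), and then axiom (iii) together with the anti-commutativity $I_j \circ I_k = -I_k \circ I_j$ for $j \neq k$ (recalled right after Definition \ref{HKT_definition}) yields $N_i = I_i$ for every $i \in \mathbb{Z}_3$. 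Consequently $N_i^2 = \varepsilon_i\, \mathrm{id}_A$, and axiom (v) becomes precisely the torsion condition in (\ref{eq_Ij_almostCPS}).

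It remains to verify that these two constructions are mutually inverse, which is short: in one direction this has already been noted, since the original $\omega_i$ are the K\"{a}hler forms of $(g, N_1, N_2, N_3)$; in the other, Lemma \ref{g_and Kahler} reproduces $\mathtt{g}$ from the K\"{a}hler forms, and the transition morphisms coincide with $I_i$ by the computation above. The main obstacle I expect is the careful bookkeeping of cyclic indices and $\varepsilon$-signs, especially in the identification $N_i = I_i$; everything else essentially collects identities already proved earlier in the paper or in \cite{AC13}.
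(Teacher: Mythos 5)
Your proposal is correct and follows essentially the same route as the paper: the forward map is $(\omega_1,\omega_2,\omega_3)\mapsto(g,N_1,N_2,N_3)$ with the axioms of (\ref{HKT_axioms}) checked via the identities from \cite{AC13}, and the converse reduces to identifying the transition morphisms built from the K\"ahler forms with the $I_i$. The only (harmless) variation is that in the converse you compute $N_i=\varepsilon_i\,I_{i-1}\circ I_{i+1}$ directly from (\ref{def_Kahler_forms}) and conclude $N_i=I_i$ via axiom (iii) and anticommutativity, whereas the paper routes the same computation through Lemma \ref{g_and Kahler}, which you instead use (correctly) for the mutual-inverse check.
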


\begin{proof}
Let $(\omega_1, \omega_2, \omega_3)$ be a (para-)hypersymplectic structure with torsion on $A$ and consider the endomorphisms $N_1,N_2, N_3$ and $g^\flat$ given by (\ref{def_Ni}) and (\ref{first_defn _g}), respectively. Since the equalities $N_1 \circ N_2 = \varepsilon_1 \varepsilon_2 N_3= - N_2 \circ N_1$ hold and $g$ satisfies $\omega_i^\flat= \varepsilon_{i}\varepsilon_{i-1} g^\flat \circ N_i$ (see~\cite{AC13}), the quadruple $(g, N_1,N_2, N_3)$ is a (para-)hyperk\"{a}hler structure with torsion on $(A, \mu)$ and its K\"{a}hler forms are $\omega_1, \omega_2$ and $\omega_3$.

Conversely, let us take a  (para-)hyperk\"{a}hler structure with torsion $(\mathtt{g}, I_1, I_2, I_3)$ on $A$ and consider the associated K\"{a}hler forms $\omega_1$, $\omega_2$ and $\omega_3$, given by (\ref{def_Kahler_forms}). We claim that $(\omega_1, \omega_2, \omega_3)$ is a (para-)hypersymplectic structure with torsion on $A$. To prove this, it is enough to show that $I_1, I_2$ and  $I_3$ are the transition morphisms of $(\omega_1, \omega_2, \omega_3)$, defined by (\ref{def_Ni}).
From $\omega_i^\flat= \varepsilon_{i}\varepsilon_{i-1}\mathtt{g}^\flat \circ I_i$ (definition of the K\"{a}hler forms), we get $\mathtt{g}^\flat=\varepsilon_{i-1} \omega_i^\flat \circ I_i$ or, using Lemma~\ref{g_and Kahler},
$$\varepsilon_{i}\omega_i^\flat \circ \pi_{i+1}^{\sharp} \circ \omega_{i-1}^\flat=\omega_i^\flat \circ I_i, $$
which is equivalent to
$$I_i= \varepsilon_{i} \pi_{i+1}^{\sharp} \circ \omega_{i-1}^\flat= \pi_{i-1}^{\sharp} \circ \omega_{i+1}^\flat,$$
where we used $(I_i)^{-1}=\varepsilon_i I_i$ in the last equality.
 \end{proof}

 As a consequence of Theorem~\ref{Thm_1-1_correspondence},
if we pick a (para-)hypersymplectic structure with torsion $(\omega_1, \omega_2, \omega_3)$ on $(A, \mu)$ and consider the (para-)hyperk\"{a}hler structure with torsion $(g, N_1,N_2, N_3)$  given by Theorem~\ref{Thm_1-1_correspondence} then, the (para-)hypersymplectic structure with torsion that corresponds to $(g, N_1,N_2, N_3)$ via Theorem~\ref{Thm_1-1_correspondence}, is the initial one, i.e., $(\omega_1, \omega_2, \omega_3)$.

%
\section{Examples}
\label{sec5}
%
We present, in this section, three examples of hypersymplectic structures with torsion. The first two examples are on Lie algebras, viewed as Lie algebroids over a point, and the third is on the tangent Lie algebroid to a manifold.

The first example is inspired from \cite{GP00}. Let $\{A_1, A_2, B_1, B_2, Z, C_1, C_2, C_3 \}$ be a basis for $\mathbb{R}^8$ and let us consider the Lie algebra structure defined by $$[A_1,B_1]=Z, \quad [A_2,B_2]=-Z$$
and the remaining brackets vanish. We denote by $\{a_1, a_2, b_1, b_2, z, c_1, c_2, c_3 \}$ the dual basis and we define a triplet $(\omega_1, \omega_2, \omega_3)$ of $2$-forms on $\mathbb{R}^8$ by setting
\begin{align*}
    \omega_1&=a_1b_1 + a_2b_2+zc_1+c_2c_3;\\
    \omega_2&=-a_1a_2 +b_1b_2-zc_2+c_1c_3;\\
    \omega_3&=- a_1b_2 + a_2b_1-zc_3-c_1c_2.
\end{align*}
Their matrix representation on the basis $\{A_1, A_2, B_1, B_2, Z, C_1, C_2, C_3 \}$ and its dual is the following:

$${\mathcal {M}}_{\omega_1}=\left(
             \begin{array}{cccccccc}
               0 & 0 & 1 & 0 & 0 & 0 & 0 & 0 \\
              0 & 0 & 0 & 1 & 0 & 0 & 0 & 0 \\
               -1 & 0 & 0 & 0 & 0 & 0 & 0 & 0 \\
               0 & -1 & 0 & 0 & 0 & 0 & 0 & 0 \\
               0 & 0 & 0 & 0 & 0 & 1 & 0 & 0 \\
               0 & 0 & 0 & 0 & -1 & 0 & 0 & 0 \\
               0 & 0 & 0 & 0 & 0 & 0 & 0 & 1 \\
               0 & 0 & 0 & 0 & 0 & 0 & -1 & 0 \\
             \end{array}
           \right),
           {\mathcal {M}}_{\omega_2}=\left(
             \begin{array}{cccccccc}
               0 &-1 & 0 & 0 & 0 & 0 & 0 & 0 \\
               1 & 0 & 0 & 0 & 0 & 0 & 0 & 0 \\
               0 & 0 & 0 & 1 & 0 & 0 & 0 & 0 \\
               0 & 0 & -1 & 0 & 0 & 0 & 0 & 0 \\
               0 & 0 & 0 & 0 & 0 & 0 & -1 & 0 \\
               0 & 0 & 0 & 0 & 0 & 0 & 0 & 1 \\
               0 & 0 & 0 & 0 & 1 & 0 & 0 & 0 \\
               0 & 0 & 0 & 0 & 0 & -1 & 0 & 0 \\
             \end{array}
           \right) $$
           and $$ {\mathcal {M}}_{\omega_3}=\left(
             \begin{array}{cccccccc}
               0 &0 & 0 & -1 & 0 & 0 & 0 & 0 \\
               0 & 0 & 1 & 0 & 0 & 0 & 0 & 0 \\
               0 & -1 & 0 & 0 & 0 & 0 & 0 & 0 \\
               1 & 0 & 0 & 0 & 0 & 0 & 0 & 0 \\
               0 & 0 & 0 & 0 & 0 & 0 & 0 & -1 \\
               0 & 0 & 0 & 0 & 0 & 0 & -1 &0 \\
               0 & 0 & 0 & 0 & 0 & 1 & 0 & 0 \\
               0 & 0 & 0 & 0 & 1 & 0 & 0 & 0 \\
             \end{array}
           \right).$$
           The $2$-forms $\omega_1$, $\omega_2$ and $\omega_3$ are non-degenerate and since
           \begin{equation} \label{d_omega_i}
         {\rm d}\omega_1= -a_1b_1c_1+a_2b_2c_1, \quad    {\rm d}\omega_2= a_1b_1c_2- a_2b_2c_2, \quad  {\rm d}\omega_3= a_1b_1c_3-a_2b_2c_3,
           \end{equation}
they are not closed.
           The transition morphisms $N_1$, $N_2$ and $N_3$, given by (\ref{def_Ni}), correspond to the following matrices in the considered basis:

           $${\mathcal {M}}_{N_{1}}=-{\mathcal {M}}_{\omega_1}, \quad {\mathcal {M}}_{N_{2}}=-{\mathcal {M}}_{\omega_2}, \quad {\mathcal {M}}_{N_{3}}=-{\mathcal {M}}_{\omega_3}.$$
           Using (\ref{d_omega_i}), we have
           \begin{equation*}
    {\rm d}\omega_1(N_1(\cdot),N_1(\cdot),N_1(\cdot))={\rm d}\omega_2(N_2(\cdot),N_2(\cdot),N_2(\cdot))={\rm d}\omega_3(N_3(\cdot),N_3(\cdot),N_3(\cdot))= a_1b_1 z - a_2b_2z,
\end{equation*}
so that the triplet $(\omega_1, \omega_2, \omega_3)$ is a hypersymplectic structure with torsion on $\mathbb{R}^8$. The pseudo-metric $\mathtt{g}$ determined by $(\omega_1,\omega_2,\omega_3)$, defined by (\ref{first_defn _g}), is simply $\mathtt{g}=-{\textrm{id}}_{\mathbb{R}^8}$.

Next, we address an explicit example~\cite{madsen_thesis} of an hypersymplectic structure with torsion on the Lie algebra $\mathfrak{su}(3)$ of the Lie group $SU(3)$.

We write $E_{pq}$ for the elementary $3\times 3$-matrix with $1$ at position $(p,q)$ and consider the basis of $\mathfrak{su}(3)$ consisting of eight complex matrices:
\begin{center}
\begin{tabular}{ll}
  $A_1=i(E_{11}-E_{22})$, &\quad $A_2=i(E_{22}-E_{33})$, \\
  $B_{pq}= E_{pq}-E_{qp}$,&\quad $C_{pq}= i(E_{pq}+E_{qp})$,
\end{tabular}
\end{center}
where $p,q \in \{1,2,3\}$ such that $p<q$. We denote by $\{a_1, a_2, \ldots, c_{23}\}$  the dual basis and
define a triplet of $2$-forms on $SU(3)$ by setting
\begin{align*}
    \omega_1&=-\frac{\sqrt{3}}{2}a_1a_2 + b_{12}c_{12} + b_{13}c_{13}- b_{23}c_{23};\\
    \omega_2&=\frac{\sqrt{3}}{2}a_2b_{12} - a_1c_{12} +\frac{1}{2} a_2c_{12}- b_{13}b_{23} + c_{13}c_{23};\\
    \omega_3&=\frac{\sqrt{3}}{2}a_2c_{12} + a_1b_{12} -\frac{1}{2} a_2b_{12}+ b_{13}c_{23} + b_{23}c_{13}.
\end{align*}
The $2$-forms have a matrix representation, on the basis $(A_1, A_2, B_{12}, B_{13}, B_{23}, C_{12}, C_{13}, C_{23})$ and its dual, given by
$${\mathcal {M}}_{\omega_1}=\left(
             \begin{array}{cccccccc}
               0 & -\frac{\sqrt{3}}{2} & 0 & 0 & 0 & 0 & 0 & 0 \\
               \frac{\sqrt{3}}{2} & 0 & 0 & 0 & 0 & 0 & 0 & 0 \\
               0 & 0 & 0 & 0 & 0 & 1 & 0 & 0 \\
               0 & 0 & 0 & 0 & 0 & 0 & 1 & 0 \\
               0 & 0 & 0 & 0 & 0 & 0 & 0 & -1 \\
               0 & 0 & -1 & 0 & 0 & 0 & 0 & 0 \\
               0 & 0 & 0 & -1 & 0 & 0 & 0 & 0 \\
               0 & 0 & 0 & 0 & 1 & 0 & 0 & 0 \\
             \end{array}
           \right),
{\mathcal {M}}_{\omega_2}=\left(
             \begin{array}{cccccccc}
               0 & 0 & 0 & 0 & 0 & -1 & 0 & 0 \\
               0 & 0 & \frac{\sqrt{3}}{2} & 0 & 0 & \frac{1}{2} & 0 & 0 \\
               0 & -\frac{\sqrt{3}}{2} & 0 & 0 & 0 & 0 & 0 & 0 \\
               0 & 0 & 0 & 0 & -1 & 0 & 0 & 0 \\
               0 & 0 & 0 & 1 & 0 & 0 & 0 & 0 \\
               1 & -\frac{1}{2} & 0 & 0 & 0 & 0 & 0 & 0 \\
               0 & 0 & 0 & 0 & 0 & 0 & 0 & -1 \\
               0 & 0 & 0 & 0 & 0 & 0 & 1 & 0 \\
             \end{array}
           \right)$$
           and
$${\mathcal {M}}_{\omega_3}=\left(
             \begin{array}{cccccccc}
               0 & 0 & 1 & 0 & 0 & 0 & 0 & 0 \\
               0 & 0 & -\frac{1}{2} & 0 & 0 & \frac{\sqrt{3}}{2} & 0 & 0 \\
               -1 & \frac{1}{2} & 0 & 0 & 0 & 0 & 0 & 0 \\
               0 & 0 & 0 & 0 & 0 & 0 & 0 & 1 \\
               0 & 0 & 0 & 0 & 0 & 0 & 1 & 0 \\
               0 & -\frac{\sqrt{3}}{2} & 0 & 0 & 0 & 0 & 0 & 0 \\
               0 & 0 & 0 & 0 & -1 & 0 & 0 & 0 \\
               0 & 0 & 0 & -1 & 0 & 0 & 0 & 0 \\
             \end{array}
           \right).$$
These $2$-forms are not closed; for example, we have
\begin{multline*}
{\rm d}\omega_1=-\sqrt{3}a_1(b_{13}c_{13}+b_{23}c_{23})+ \sqrt{3}a_2(b_{12}c_{12}\\+b_{13}c_{13})-b_{12}b_{13}c_{23}-b_{12}b_{23}c_{13}-b_{13}b_{23}c_{12}-c_{12}c_{13}c_{23}.
\end{multline*}
The transition morphisms $N_1$, $N_2$ and $N_3$, given by (\ref{def_Ni}), correspond to the following matrices in the considered basis:
$${\mathcal {M}}_{N_{1}}=\left(
             \begin{array}{cccccccc}
               -\frac{1}{\sqrt{3}} & \frac{2}{\sqrt{3}} & 0 & 0 & 0 & 0 & 0 & 0 \\
               -\frac{2}{\sqrt{3}} & \frac{1}{\sqrt{3}} & 0 & 0 & 0 & 0 & 0 & 0 \\
               0 & 0 & 0 & 0 & 0 & -1 & 0 & 0 \\
               0 & 0 & 0 & 0 & 0 & 0 & -1 & 0 \\
               0 & 0 & 0 & 0 & 0 & 0 & 0 & 1 \\
               0 & 0 & 1 & 0 & 0 & 0 & 0 & 0 \\
               0 & 0 & 0 & 1 & 0 & 0 & 0 & 0 \\
               0 & 0 & 0 & 0 & -1 & 0 & 0 & 0 \\
             \end{array}
           \right),
{\mathcal {M}}_{N_{2}}=\left(
             \begin{array}{cccccccc}
               0 & 0 & -\frac{1}{\sqrt{3}} & 0 & 0 & 1 & 0 & 0 \\
               0 & 0 & -\frac{2}{\sqrt{3}} & 0 & 0 & 0 & 0 & 0 \\
               0 & \frac{\sqrt{3}}{2} & 0 & 0 & 0 & 0 & 0 & 0 \\
               0 & 0 & 0 & 0 & 1 & 0 & 0 & 0 \\
               0 & 0 & 0 & -1 & 0 & 0 & 0 & 0 \\
               -1 & \frac{1}{2} & 0 & 0 & 0 & 0 & 0 & 0 \\
               0 & 0 & 0 & 0 & 0 & 0 & 0 & 1 \\
               0 & 0 & 0 & 0 & 0 & 0 & -1 & 0 \\
             \end{array}
           \right)$$
           and
$${\mathcal {M}}_{N_{3}}=\left(
             \begin{array}{cccccccc}
               0 & 0 & -1 & 0 & 0 & -\frac{1}{\sqrt{3}} & 0 & 0 \\
               0 & 0 & 0 & 0 & 0 & -\frac{2}{\sqrt{3}} & 0 & 0 \\
               1 & -\frac{1}{2} & 0 & 0 & 0 & 0 & 0 & 0 \\
               0 & 0 & 0 & 0 & 0 & 0 & 0 & -1 \\
               0 & 0 & 0 & 0 & 0 & 0 & -1 &  \\
               0 & \frac{\sqrt{3}}{2} & 0 & 0 & 0 & 0 & 0 & 0 \\
               0 & 0 & 0 & 0 & 1 & 0 & 0 & 0 \\
               0 & 0 & 0 & 1 & 0 & 0 & 0 & 0 \\
             \end{array}
           \right).$$
An easy computation gives
\begin{align*}
    &{\rm d}\omega_1(N_1(\cdot),N_1(\cdot),N_1(\cdot))={\rm d}\omega_2(N_2(\cdot),N_2(\cdot),N_2(\cdot))={\rm d}\omega_3(N_3(\cdot),N_3(\cdot),N_3(\cdot))\\
    &\quad= -a_1b_{13}c_{13}+a_1b_{23}c_{23} -2a_1b_{12}c_{12} - a_2b_{13}c_{13} - 2a_2b_{23}c_{23}\\
    &\quad\quad + a_2b_{12}c_{12} +b_{23}c_{12}c_{13} + b_{13}c_{12}c_{23} + b_{12}c_{13}c_{23} + b_{12}b_{13}b_{23},
\end{align*}
which shows that the triplet $(\omega_1, \omega_2, \omega_3)$ is a hypersymplectic structure with torsion on $\mathfrak{su}(3)$.
Finally, the pseudo-metric is given by
$$
{\mathcal {M}}_{\mathtt{g}}
=\left(
             \begin{array}{cccccccc}
               -1 & \frac{1}{2} & 0 & 0 & 0 & 0 & 0 & 0 \\
               \frac{1}{2} & -1 & 0 & 0 & 0 & 0 & 0 & 0 \\
               0 & 0 & -1 & 0 & 0 & 0 & 0 & 0 \\
               0 & 0 & 0 & -1 & 0 & 0 & 0 & 0 \\
               0 & 0 & 0 & 0 & -1 & 0 & 0 & 0 \\
               0 & 0 & 0 & 0 & 0 & -1 & 0 & 0 \\
               0 & 0 & 0 & 0 & 0 & 0 & -1 & 0 \\
               0 & 0 & 0 & 0 & 0 & 0 & 0 & -1 \\
             \end{array}
           \right).
$$

In the third example, which is taken form \cite{CS08}, we describe a hypersymplectic structure on the Lie algebroid tangent to the manifold $M=S^3 \times (S^1)^5$. The sphere $S^3$ is identified with the Lie group $Sp(1)$. In its Lie algebra $\mathfrak{sp}(1)$ we consider a basis $\{A_2, A_3, A_4 \}$ and the brackets
$$[A_2, A_3]=2A_4, \quad [A_3, A_4]=2A_2, \quad [A_4, A_2]=2A_3.$$
Let $\{a_2, a_3, a_4 \}$ be the dual basis and let us consider a basis $\{a_1, a_5, a_6, a_7, a_8 \}$ of $1$-forms on $(S^1)^5$. We define a triplet $(\omega_1, \omega_2, \omega_3)$ of $2$-forms on $M$ by setting, on the basis $\{a_2, a_3, a_4, a_1, a_5, a_6, a_7, a_8 \}$,
\begin{align*}
    \omega_1&=a_2a_1 +a_4a_3+a_6a_5+a_8a_7;\\
    \omega_2&=a_3a_1+a_2a_4+a_7a_5+a_6a_8;\\
    \omega_3&=a_4a_1+a_3a_2+a_8a_5+a_7a_6.
\end{align*}
Their matrix representation on the considered basis is given by
$${\mathcal {M}}_{\omega_1}=\left(
             \begin{array}{cccccccc}
               0 & 0 & 0 & 1 & 0 & 0 & 0 & 0 \\
              0 & 0 & -1 & 0 & 0 & 0 & 0 & 0 \\
               0 & 1 & 0 & 0 & 0 & 0 & 0 & 0 \\
               -1 & 0 & 0 & 0 & 0 & 0 & 0 & 0 \\
               0 & 0 & 0 & 0 & 0 & -1 & 0 & 0 \\
               0 & 0 & 0 & 0 & 1 & 0 & 0 & 0 \\
               0 & 0 & 0 & 0 & 0 & 0 & 0 & -1 \\
               0 & 0 & 0 & 0 & 0 & 0 & 1 & 0 \\
             \end{array}
           \right),
          {\mathcal {M}}_{\omega_2}=\left(
             \begin{array}{cccccccc}
               0 &0 & 1 & 0 & 0 & 0 & 0 & 0 \\
               0 & 0 & 0 & 1 & 0 & 0 & 0 & 0 \\
               -1 & 0 & 0 & 0 & 0 & 0 & 0 & 0 \\
               0 & -1 & 0 & 0 & 0 & 0 & 0 & 0 \\
               0 & 0 & 0 & 0 & 0 & 0 & -1 & 0 \\
               0 & 0 & 0 & 0 & 0 & 0 & 0 & 1 \\
               0 & 0 & 0 & 0 & 1 & 0 & 0 & 0 \\
               0 & 0 & 0 & 0 & 0 & -1 & 0 & 0 \\
             \end{array}
           \right) $$
           and $$ {\mathcal {M}}_{\omega_3}=\left(
             \begin{array}{cccccccc}
               0 &-1 & 0 & 0 & 0 & 0 & 0 & 0 \\
               1 & 0 & 0 & 0 & 0 & 0 & 0 & 0 \\
               0 & 0 & 0 & 1 & 0 & 0 & 0 & 0 \\
               0 & 0 & -1 & 0 & 0 & 0 & 0 & 0 \\
               0 & 0 & 0 & 0 & 0 & 0 & 0 & -1 \\
               0 & 0 & 0 & 0 & 0 & 0 & -1 &0 \\
               0 & 0 & 0 & 0 & 0 & 1 & 0 & 0 \\
               0 & 0 & 0 & 0 & 1 & 0 & 0 & 0 \\
             \end{array}
           \right).$$
           The $2$-forms are non-degenerate and not closed:
           \begin{equation*}
           \d \omega_1= -2 a_1a_3a_4, \quad \d \omega_2= -2a_1a_4a_2, \quad \d \omega_3= -2a_1a_2a_3.
           \end{equation*}
 The transition morphisms $N_1$, $N_2$ and $N_3$, given by (\ref{def_Ni}), correspond to the following matrices:
        $${\mathcal {M}}_{N_{1}}={\mathcal {M}}_{\omega_1}, \quad {\mathcal {M}}_{N_{2}}={\mathcal {M}}_{\omega_2}, \quad {\mathcal {M}}_{N_{3}}={\mathcal {M}}_{\omega_3}.$$
Moreover,       $$
    {\rm d}\omega_1(N_1(\cdot),N_1(\cdot),N_1(\cdot))={\rm d}\omega_2(N_2(\cdot),N_2(\cdot),N_2(\cdot))={\rm d}\omega_3(N_3(\cdot),N_3(\cdot),N_3(\cdot))=2a_2a_3a_4,
$$
which shows that the triplet $(\omega_1, \omega_2, \omega_3)$ is a hypersymplectic structure with torsion on the Lie algebroid $TM$. Regarding the pseudo-metric, we have $\mathtt{g}={\rm id}_{TM}$.

%
\section{The pre-Courant algebroid case}
\label{sec6}
%

In this section we firstly recall the notion of \emph{(para-)hypersymplectic structure} on a pre-Courant algebroid, introduced in~\cite{AC14a}. Then, we prove some results involving the Nijenhuis torsions of morphisms on $A$ and on $A\oplus A^*$.

In order to simplify the notation, when $\mathcal{I}$ and $\mathcal{J}$ are endomorphisms of a pre-Courant algebroid, the composition $\mathcal{I}\circ \mathcal{J}$ will be denoted by $\mathcal{I}\mathcal{J}$. 

\

Let us consider a triplet $(\mathcal{S}_1, \mathcal{S}_2, \mathcal{S}_3)$ of skew-symmetric (w.r.t. $\langle\cdot,\cdot\rangle$) endomorphisms $\mathcal{S}_i: E \to E$, $i=1,2,3$, on a pre-Courant algebroid $(E, \langle\cdot,\cdot\rangle, \Theta)$, such that
\begin{enumerate}
  \item ${\mathcal{S}_i}^2=\varepsilon_i \,{\rm id}_E$,
  \item $\mathcal{S}_i\mathcal{S}_j=-\mathcal{S}_j\mathcal{S}_i$,\ \ $i,j \in \{1,2,3\},\ i\neq j$,
  \item $\Theta_{\mathcal{S}_i,\mathcal{S}_i}=\varepsilon_i \Theta$, \footnote{We use the following notation: $\Theta_{\mathcal{I}}=\{\mathcal{I}, \Theta \}$ and $\Theta_{\mathcal{I}, \mathcal{J}}=\{\mathcal{J},\{\mathcal{I}, \Theta \}\}$, with $\mathcal{I}, \mathcal{J}$ skew-symmetric endomorphisms of $E$. For further details on the supergeometric approach see~\cite{royContemp}.}
\end{enumerate}
where the parameters $\varepsilon_i=\pm 1$ satisfy $\varepsilon_1\varepsilon_2\varepsilon_3=-1$.

%
If $\varepsilon_1=\varepsilon_2=\varepsilon_3=-1$, then $(\mathcal{S}_1, \mathcal{S}_2, \mathcal{S}_3)$ is said to be a \emph{hypersymplectic} structure on $(E, \Theta)$.
Otherwise, $(\mathcal{S}_1, \mathcal{S}_2, \mathcal{S}_3)$ is said to be a \emph{para-hypersymplectic} structure on $(E, \Theta)$.




The \emph{transition morphisms} are the endomorphisms $\mathcal{T}_1,\mathcal{T}_2$ and $\mathcal{T}_3$ of $E$ defined as
\begin{equation} \label{def_transition}
\mathcal{T}_i:=\varepsilon_{i-1} \mathcal{S}_{i-1}\mathcal{S}_{i+1}, \quad\quad i \in \mathbb{Z}_3.
\end{equation}
\

Among the pre-Courant algebroid structures, we shall be interested in those defined on vector bundles of type $A\oplus A^*$, equipped with the natural pairing, since these can be related to structures on $A$.

If we take a triplet $(\omega_1, \omega_2, \omega_3)$ of $2$-forms and a triplet $(\pi_1,\pi_2, \pi_3)$ of bivectors on $A$, we may define the skew-symmetric endomorphisms \mbox{$\mathcal{S}_i: A \oplus A^* \to A \oplus A^*$}, $i=1,2,3$,
\begin{equation} \label{definition_Si}
\mathcal{S}_i:=\left(
                    \begin{array}{ccc}
                    0&\ &\varepsilon_i\,\pi_{i}^{\sharp}\\
                    \omega_{i}^{\flat}&\ &0
                    \end{array}
\right).
\end{equation}

%
The next proposition was proved in \cite{AC14a} for the hypersymplectic case. The para-hypersymplectic case has an analogous proof.

\begin{prop}  \label{HS_structure_mu+gamma+psi}
Let $(\omega_1, \omega_2, \omega_3)$ be a triplet of $2$-forms and $(\pi_1,\pi_2, \pi_3)$ be a triplet  of bivectors on a Lie algebroid $(A,\mu)$. Consider the triplet $(\mathcal{S}_1, \mathcal{S}_2, \mathcal{S}_3)$ of endomorphisms of $A \oplus A^*$, with $\mathcal{S}_i$ given by (\ref{definition_Si}). The following assertions are equivalent:
\begin{enumerate}
    \item $(\omega_1, \omega_2, \omega_3)$ is a (para-)hypersymplectic structure with torsion on the Lie algebroid $(A,\mu)$ and $\pi_i$ is the inverse of $\omega_i$, $i=1,2,3$;
    \item $(\mathcal{S}_1, \mathcal{S}_2, \mathcal{S}_3)$ is a (para-)hypersymplectic structure on the pre-Courant algebroid $(A\oplus A^*, \mu+\psi)$, for some $\psi\in \Gamma(\wedge^3 A)$.
\end{enumerate}
\end{prop}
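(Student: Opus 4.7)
The plan is to verify separately each of the three pre-Courant hypersymplectic axioms $(1)$--$(3)$ for the block endomorphisms $\mathcal{S}_i$ defined by (\ref{definition_Si}), expressing each axiom as a condition intrinsic to $(A,\mu)$, the triplet $(\omega_i,\pi_i)$, and a candidate 3-vector $\psi$. Since the paper points to \cite{AC14a} for the case $\varepsilon_1=\varepsilon_2=\varepsilon_3=-1$, the task is really to observe that none of the algebraic steps in that proof depend on the sign of the $\varepsilon_i$: they are carried through formally, and the designation hypersymplectic versus para-hypersymplectic is synchronized on both sides of the equivalence by the same $\varepsilon_i$.

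The axioms $(1)$ and $(2)$ are purely tensorial and should be handled first. A direct block computation gives
\[
\mathcal{S}_i^2=\begin{pmatrix}\varepsilon_i\,\pi_i^\sharp\circ\omega_i^\flat & 0\\ 0 & \varepsilon_i\,\omega_i^\flat\circ\pi_i^\sharp\end{pmatrix},
\]
so $\mathcal{S}_i^2=\varepsilon_i\,\mathrm{id}$ is equivalent to $\pi_i$ being the inverse of $\omega_i$. Under this identification, the off-diagonal vanishing in $\mathcal{S}_i\mathcal{S}_j+\mathcal{S}_j\mathcal{S}_i=0$ reduces to the identities $\pi_i^\sharp\circ\omega_j^\flat=-\tfrac{\varepsilon_j}{\varepsilon_i}\pi_j^\sharp\circ\omega_i^\flat$ and its dual, which, by the definition $N_k:=\pi_{k-1}^\sharp\circ\omega_{k+1}^\flat$ in (\ref{def_Ni}) and a short manipulation, are equivalent to the three conditions $N_k^2=\varepsilon_k\,\mathrm{id}_A$ appearing in Definition~\ref{def_HST}. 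Thus, before touching the pre-Courant structure, axioms $(1)$ and $(2)$ already capture the invertibility and the almost (para-)complex relations of $(\omega_1,\omega_2,\omega_3)$.

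The substance lies in axiom $(3)$, namely $\Theta_{\mathcal{S}_i,\mathcal{S}_i}=\varepsilon_i\Theta$ with $\Theta=\mu+\psi$. In the big-bracket formalism, $\mathcal{S}_i$ is represented by the degree-$2$ function $\omega_i+\varepsilon_i\pi_i$ on $T^*[2](A\oplus A^*)[1]$, and the double derived bracket $\{\mathcal{S}_i,\{\mathcal{S}_i,\mu+\psi\}\}$ decomposes according to bidegree in $(A,A^*)$. The bivector-only part forces $\varepsilon_i[\pi_i,\pi_i]=2\psi$, which by Lemma~\ref{pi_omega_inverses} is the twisted Poisson condition with background $\d\omega_i$; the $2$-form--only part forces the twisting of $\mu$ on $A$ to stay $\mu$, so $\mu$ really is a Lie algebroid structure on $A$; and the mixed parts, transported by the inverses $\omega_i\leftrightarrow\pi_i$, encode exactly the equality $\varepsilon_{i+1}N_i\d\omega_i=H$ with a common $H$. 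Imposing axiom $(3)$ simultaneously for $i=1,2,3$ produces a single $\psi\in\Gamma(\wedge^3 A)$ with $\varepsilon_1[\pi_1,\pi_1]=\varepsilon_2[\pi_2,\pi_2]=\varepsilon_3[\pi_3,\pi_3]=2\psi$, which by Theorem~\ref{thm_HST_equivalent_contravariant_form} is equivalent to the torsion condition in Definition~\ref{def_HST}. This establishes the equivalence in both directions, the converse being read off the same calculations by defining $\psi:=\tfrac{1}{2}\varepsilon_i[\pi_i,\pi_i]$ and checking that the three derivations produce the same value.

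The main technical obstacle is the careful bigraded bookkeeping in axiom $(3)$: the expression $\{\mathcal{S}_i,\{\mathcal{S}_i,\mu+\psi\}\}$ contains several pieces mixing $\omega_i$, $\pi_i$, $\mu$ and $\psi$, and one must check that, once the $\varepsilon$-signs are distributed, the obstruction to $\psi$ being well defined collapses precisely to the common-value condition $\varepsilon_1[\pi_1,\pi_1]=\varepsilon_2[\pi_2,\pi_2]=\varepsilon_3[\pi_3,\pi_3]$. Apart from tracking signs, the para-hypersymplectic case differs from the hypersymplectic one treated in \cite{AC14a} only in that some $\varepsilon_i$ now equal $+1$ instead of $-1$; since each identity above is stated symbol by symbol in terms of $\varepsilon_i$, the derivation carries over verbatim, and the designation of the resulting structure on $A\oplus A^*$ as hypersymplectic or para-hypersymplectic matches that of $(\omega_1,\omega_2,\omega_3)$ automatically.
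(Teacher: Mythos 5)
Your proposal is correct and follows essentially the same route as the paper, whose proof of Proposition~\ref{HS_structure_mu+gamma+psi} consists precisely in invoking the computation of \cite{AC14a} for the case $\varepsilon_1=\varepsilon_2=\varepsilon_3=-1$ and observing that the para-hypersymplectic case is analogous, i.e.\ that the $\varepsilon$-signs play no role in the algebraic steps. Your added sketch (the block computation for axioms $(1)$--$(2)$, and the bidegree splitting of $(\mu+\psi)_{\mathcal{S}_i,\mathcal{S}_i}=\varepsilon_i(\mu+\psi)$ forcing $\psi=\tfrac{\varepsilon_i}{2}[\pi_i,\pi_i]$, hence the common-value condition of Theorem~\ref{thm_HST_equivalent_contravariant_form}) matches the structure of that computation, apart from harmless slips of wording (the nonzero blocks of $\mathcal{S}_i\mathcal{S}_j+\mathcal{S}_j\mathcal{S}_i$ are the diagonal ones, and for a fixed $i$ the mixed bidegree component pins down $\psi$ rather than directly encoding the common $3$-form $H$).
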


Notice that in the assertion ii) of Proposition~\ref{HS_structure_mu+gamma+psi}, since $(\mathcal{S}_1, \mathcal{S}_2, \mathcal{S}_3)$ is a hypersymplectic structure, condition $(\mu+\psi)_{\mathcal{S}_k, \mathcal{S}_k}= \varepsilon_k(\mu+\psi)$ holds and implies that $\psi$ has to be of the form $\frac{\varepsilon_k}{2}[\pi_k, \pi_k]$, for any $k\in \{1,2,3\}$.

Under the conditions of Proposition~\ref{HS_structure_mu+gamma+psi}, the transition morphisms of the  (para-)hypersymplectic structure $(\mathcal{S}_1, \mathcal{S}_2, \mathcal{S}_3)$ on $(A\oplus A^*,\mu+\psi)$, defined by  (\ref{def_transition}), are given by
\begin{equation} \label{def_T_i}
\mathcal{T}_i=\left(
                    \begin{array}{ccc}
                        N_i&\ &0\\
                        0&\ & -{N_i}^*
                    \end{array}
                \right), \quad i=1,2,3,
\end{equation}
where $N_i$ is defined by (\ref{def_Ni}).
%

Recall that the Nijenhuis torsion ${\text{\Fontlukas T}}_{\mu}N$ of an endomorphism $N$ on a pre-Lie algebroid\,\footnote{A pre-Lie algebroid is a pair $(A, \mu)$ that satisfies the axioms of the Lie algebroid definition except, eventually, the Jacobi identity. In other words, we may have $\bb{\mu}{\mu}\neq 0$.} $(A, \mu)$ is given by
\begin{equation} \label{torsion_N}
{\text{\Fontlukas T}}_{\mu}N (X,Y)=[NX,NY]- N([NX,Y]+[X,NY]-N[X,Y]),
\end{equation}
for all $X,Y \in \Gamma(A)$.

The Dorfman bracket of a pre-Courant algebroid $(E, \Theta)$ is defined, for all $X,Y \in \Gamma(E)$, by the derived bracket expression
\begin{equation}\label{eq_derived_bracket_expressions}
  {\Dorf{X}{Y}=\{\{X,\Theta\},Y\}}.
\end{equation}
The Nijenhuis torsion ${\text{\Fontlukas T}}_{\Theta}{\mathcal{I}}$ of an endomorphism $\mathcal{I}$ on $(E, \Theta)$ is given by~\cite{ALC11,YKS11}
\begin{equation}\label{def_Nijenhuis_torsion}
    {\text{\Fontlukas T}}_{\Theta}{\mathcal{I}}(X,Y)=\Dorf{\mathcal{I}X}{\mathcal{I}Y}-\mathcal{I}\left(\Dorf{X}{Y}_{\mathcal{I}}\right)
        =\frac{1}{2}\left(\Dorf{X}{Y}_{\mathcal{I},\mathcal{I}}-\Dorf{X}{Y}_{\mathcal{I}^2}\right),
\end{equation}
for all $X,Y \in \Gamma(E)$, where
\begin{equation} \label{deformed_Dorfman_bracket}
    {\Dorf{X}{Y}_{\mathcal{I}}} =\Dorf{{\mathcal{I}}X}{Y}+\Dorf{X}{{\mathcal{I}}Y}-{\mathcal{I}}\Dorf{X}{Y}
    \quad\textrm{ and }\quad \Dorf{\cdot}{\cdot}_{\mathcal{I},\mathcal{I}}=\Big(\Dorf{\cdot}{\cdot}_{\mathcal{I}}\Big)_\mathcal{I}.
\end{equation}

 The next proposition addresses a relation between ${\text{\Fontlukas T}}_{\mu}N$ and the Nijenhuis torsion of the skew-symmetric
 morphism $\mathcal{D}_{\!{}_N}=N\oplus (-N^*)$ on the pre-Courant algebroid $(A\oplus A^*, \mu)$ (see (\ref{def_Nijenhuis_torsion})). To the best of our knowledge formula (\ref{eq_relation_T(JN)_and_T(N)}) is new.


\begin{prop}\label{prop_relation_T(JN)_and_T(N)}
    Let $(A,\mu)$ be a pre-Lie algebroid and define $\mathcal{D}_{\!{}_N}:A\oplus A^* \to A\oplus A^*$ by setting $\mathcal{D}_{\!{}_N}=\left(
                                                                                                          \begin{array}{cc}
                                                                                                            N & 0 \\
                                                                                                            0 & -N^* \\
                                                                                                          \end{array}
                                                                                                        \right)$,
    where $N: A\to A$ is a bundle morphism. The Nijenhuis torsion ${\text{\Fontlukas T}}_{\mu}\mathcal{D}_{\!{}_N}$ of the endomorphism $\mathcal{D}_{\!{}_N}$ on the pre-Courant algebroid $(A\oplus A^*,\mu)$ is given by
\begin{multline}\label{eq_relation_T(JN)_and_T(N)}
    {\text{\Fontlukas T}}_{\mu}\mathcal{D}_{\!{}_N}(X+\alpha, Y+\beta)=\Dorf{X+\alpha}{Y+\beta}_{{\text{\Fontlukas T}}_{\mu}N}\\+(N^*)^2\Dorf{X}{ \beta}-\Dorf{X}{(N^*)^2\beta}
            +(N^*)^2\Dorf{\alpha}{Y}-\Dorf{(N^*)^2\alpha}{Y},
\end{multline}
for all $X+\alpha, Y+\beta \in \Gamma(A\oplus A^*)$, where $\Dorf{\cdot}{\cdot}$ and $\Dorf{\cdot}{\cdot}_{{\text{\Fontlukas T}}_{\mu}N}$ stand for the Dorfman brackets determined by the superfunctions $\mu$ and ${\text{\Fontlukas T}}_{\mu}N$, respectively\footnote{Note that ${\text{\Fontlukas T}}_{\mu}N$ is seen as a function in the supergeometric sense, so that we may consider the Dorfman bracket associated to it.}, according to (\ref{eq_derived_bracket_expressions}).
\end{prop}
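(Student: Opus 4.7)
The plan is to work in the supergeometric setting of the paper, identifying the skew-symmetric endomorphism $\mathcal{D}_{\!{}_N}$ with its associated bidegree-$(1,1)$ superfunction on $T^{*}[2]A[1]$, so that $\mathcal{D}_{\!{}_N} u = \{\mathcal{D}_{\!{}_N}, u\}$ on sections of $A \oplus A^{*}$. Starting from the second form of the Nijenhuis torsion in (\ref{def_Nijenhuis_torsion}),
\[
    {\text{\Fontlukas T}}_{\mu}\mathcal{D}_{\!{}_N}(u,v) = \tfrac{1}{2}\bigl(\Dorf{u}{v}_{\mathcal{D}_{\!{}_N},\mathcal{D}_{\!{}_N}} - \Dorf{u}{v}_{\mathcal{D}_{\!{}_N}^{2}}\bigr),
\]
the crucial observation is that $\mathcal{D}_{\!{}_N}^{2} = N^{2} \oplus (N^{*})^{2}$ is \emph{symmetric}, not skew, and therefore does not coincide with the natural skew-symmetric lift $\mathcal{D}_{\!{}_{N^{2}}} = N^{2} \oplus (-(N^{*})^{2})$ used in (\ref{def_T_i}). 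Concretely,
\[
    \mathcal{D}_{\!{}_N}^{2} - \mathcal{D}_{\!{}_{N^{2}}} = 2P, \qquad P(X+\alpha) := (N^{*})^{2}\alpha,
\]
and this sign mismatch on the $A^{*}$-component is precisely what produces the four correction terms in (\ref{eq_relation_T(JN)_and_T(N)}).

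The core of the argument is the superfunction identity
\[
    \mu_{\mathcal{D}_{\!{}_N}, \mathcal{D}_{\!{}_N}} \;=\; \mu_{\mathcal{D}_{\!{}_{N^{2}}}} + 2\,{\text{\Fontlukas T}}_{\mu}N,
\]
where ${\text{\Fontlukas T}}_{\mu}N$, via (\ref{torsion_N}), is interpreted as a bidegree-$(2,1)$ superfunction. Since any such superfunction is determined by its $\{\{X,\,\cdot\,\},Y\}$-components for $X,Y\in\Gamma(A)$, the identity reduces to the elementary verification that iterating (\ref{deformed_Dorfman_bracket}) on arguments in $\Gamma(A)$ gives $\Dorf{X}{Y}_{\mathcal{D}_{\!{}_N},\mathcal{D}_{\!{}_N}} = \Dorf{X}{Y}_{\mathcal{D}_{\!{}_{N^{2}}}} + 2\,{\text{\Fontlukas T}}_{\mu}N(X,Y)$, which holds by a term-by-term expansion of iterated Lie brackets on $A$.

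Granted this identity, and using the linearity of $\Dorf{u}{v}_{\bullet}$ in its endomorphism argument together with the decomposition $\Dorf{u}{v}_{\mathcal{D}_{\!{}_N}^{2}} = \Dorf{u}{v}_{\mathcal{D}_{\!{}_{N^{2}}}} + 2\Dorf{u}{v}_{P}$, the difference in the torsion formula collapses to
\[
    {\text{\Fontlukas T}}_{\mu}\mathcal{D}_{\!{}_N}(u,v) \;=\; \Dorf{u}{v}_{{\text{\Fontlukas T}}_{\mu}N} \;-\; \Dorf{u}{v}_{P}.
\]
It remains to expand $-\Dorf{u}{v}_{P}$ via (\ref{deformed_Dorfman_bracket}) using that $P$ vanishes on $\Gamma(A)$ and acts as $(N^{*})^{2}$ on $\Gamma(A^{*})$, together with the vanishing of the Dorfman bracket on $\Gamma(A^{*})\times\Gamma(A^{*})$. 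This produces exactly the four summands $(N^{*})^{2}\Dorf{X}{\beta}-\Dorf{X}{(N^{*})^{2}\beta}+(N^{*})^{2}\Dorf{\alpha}{Y}-\Dorf{(N^{*})^{2}\alpha}{Y}$, completing the proof of (\ref{eq_relation_T(JN)_and_T(N)}).

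The main obstacle is the superfunction identity of the second paragraph: its direct verification at the level of $\{\,\cdot\,,\,\cdot\,\}$ requires careful tracking of signs in the big bracket (since $\mathcal{D}_{\!{}_N}$ has even total degree $2$, the naive graded Jacobi application collapses to $\{\mathcal{D}_{\!{}_N},\mathcal{D}_{\!{}_N}\}=0$ and does not simplify $\{\mathcal{D}_{\!{}_N},\{\mathcal{D}_{\!{}_N},\mu\}\}$). The reduction to the $\Gamma(A)\times\Gamma(A)$-components is what makes it tractable, replacing a statement about degree-$3$ superfunctions with an elementary identity between iterated Lie brackets on $A$ that one verifies directly.
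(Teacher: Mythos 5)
Your argument is correct and is essentially the paper's own proof in different packaging: the paper likewise hinges on the mismatch between $\mathcal{D}_{\!{}_N}^2=N^2\oplus (N^*)^2$ and $\mathcal{D}_{\!{}_{N^2}}=N^2\oplus(-(N^*)^2)$ (explicitly called ``the key argument of this proof'') and on identifying $\tfrac{1}{2}(\mu_{N,N}-\mu_{N^2})$ with ${\text{\Fontlukas T}}_{\mu}N$, only organizing the computation by $\mathbb{R}$-bilinearity in the arguments $(X+\alpha,Y+\beta)$ rather than by your additivity-in-the-endomorphism decomposition $\mathcal{D}_{\!{}_N}^2=\mathcal{D}_{\!{}_{N^2}}+2P$. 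The only (cosmetic) slip is that in the paper's conventions ${\text{\Fontlukas T}}_{\mu}N$ is a superfunction of bidegree $(1,2)$, not $(2,1)$.
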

\begin{proof}
Using the $\mathbb{R}$-bilinearity of ${\text{\Fontlukas T}}_{\mu}\mathcal{D}_{\!{}_N}$ we have
\begin{multline}\label{eq_aux1_proof_relation_T(JN)_and_T(N)}
    {\text{\Fontlukas T}}_{\mu}\mathcal{D}_{\!{}_N}(X+\alpha, Y+\beta)={\text{\Fontlukas T}}_{\mu}\mathcal{D}_{\!{}_N}(X, Y)+{\text{\Fontlukas T}}_{\mu}\mathcal{D}_{\!{}_N}(X, \beta)\\+{\text{\Fontlukas T}}_{\mu}\mathcal{D}_{\!{}_N}(\alpha, Y)+{\text{\Fontlukas T}}_{\mu}\mathcal{D}_{\!{}_N}(\alpha, \beta),
\end{multline}
for all $X,Y \in \Gamma(A)$ and $\alpha, \beta \in \Gamma(A^*)$, where we identify $X+0 \in \Gamma(A \oplus A^*)$ with $X$ and $0+ \alpha \in \Gamma(A \oplus A^*)$ with $\alpha$. In what follows, we explicit each of the summands of the r.h.s. of Equation (\ref{eq_aux1_proof_relation_T(JN)_and_T(N)}).
For the first summand, denoting by $\Dorf{\cdot}{\cdot}_{{\mathcal D}_N}$ the deformation of the Dorfman bracket $\Dorf{\cdot}{\cdot}$ by ${\mathcal D}_N$ (see (\ref{deformed_Dorfman_bracket})), we have
\begin{align}
    {\text{\Fontlukas T}}_{\mu}\mathcal{D}_{\!{}_N}(X, Y)&=\Dorf{\mathcal{D}_{\!{}_N}X}{\mathcal{D}_{\!{}_N}Y}-\mathcal{D}_{\!{}_N}\left(\Dorf{X}{Y}_{\mathcal{D}_{\!{}_N}}\right)\nonumber\\
    &=\Dorf{\mathcal{D}_{\!{}_N}X}{\mathcal{D}_{\!{}_N}Y}-\mathcal{D}_{\!{}_N}\Dorf{\mathcal{D}_{\!{}_N}X}{Y} - \mathcal{D}_{\!{}_N}\Dorf{X}{\mathcal{D}_{\!{}_N}Y} + \mathcal{D}_{\!{}_N}^2\Dorf{X}{Y}\nonumber\\
    &=[NX, NY]-N[NX,Y] - N[X,NY] + N^2[X,Y]\nonumber\\
    &={\text{\Fontlukas T}}_{\mu}N(X, Y),\label{eq_aux2_proof_relation_T(JN)_and_T(N)}
\end{align}
where we used (\ref{def_Nijenhuis_torsion}) and (\ref{torsion_N}) and the fact that, when restricted to sections of $A$, $\mathcal{D}_{\!{}_N}$ coincides with $N$. In the supergeometric setting, (\ref{eq_aux2_proof_relation_T(JN)_and_T(N)}) may be written as
\begin{equation}\label{eq_aux3_proof_relation_T(JN)_and_T(N)}
    {\text{\Fontlukas T}}_{\mu}\mathcal{D}_{\!{}_N}(X, Y)=\frac{1}{2}\bb{\bb{X}{\mu_{N,N}-\mu_{N^2}}}{Y}.
\end{equation}
The second summand of the r.h.s. of (\ref{eq_aux1_proof_relation_T(JN)_and_T(N)}) can be written as (see (\ref{def_Nijenhuis_torsion}))
\begin{equation}\label{eq_aux4_proof_relation_T(JN)_and_T(N)}
    {\text{\Fontlukas T}}_{\mu}\mathcal{D}_{\!{}_N}(X, \beta)=\frac{1}{2}\left(\Dorf{X}{\beta}_{\mathcal{D}_{\!{}_N},\mathcal{D}_{\!{}_N}}-\Dorf{X}{\beta}_{\mathcal{D}_{\!{}_N}^2}\right).
\end{equation}
The key argument of this proof is the relation between morphisms $\mathcal{D}_{\!{}_N}^2= \mathcal{D}_N \circ \mathcal{D}_N$ and $\mathcal{D}_{\!{}_{N^2}}$. In fact
$$\mathcal{D}_{\!{}_N}^2(X + \alpha)=N^2(X)+ {N^*}^2(\alpha), \quad \text{while}\quad \mathcal{D}_{\!{}_{N^2}}(X + \alpha)=N^2(X) - {N^*}^2(\alpha).$$
Thus, Equation (\ref{eq_aux4_proof_relation_T(JN)_and_T(N)}) becomes
\begin{align}\label{eq_aux5_proof_relation_T(JN)_and_T(N)}
    {\text{\Fontlukas T}}_{\mu}\mathcal{D}_{\!{}_N}(X, \beta)&=\frac{1}{2}\left(\Dorf{X}{\beta}_{\mathcal{D}_{\!{}_N},\mathcal{D}_{\!{}_N}}-\Dorf{X}{\beta}_{\mathcal{D}_{\!{}_{N^2}}}\right)-\Dorf{X}{ {N^*}^2\beta}+{N^*}^2\Dorf{X}{\beta}\nonumber\\
    &=\frac{1}{2}\bb{\bb{X}{\mu_{N,N}-\mu_{N^2}}}{\beta}-\Dorf{X}{{N^*}^2\beta}+{N^*}^2\Dorf{X}{\beta}.
\end{align}
Analogously, the third summand of the r.h.s. of Equation (\ref{eq_aux1_proof_relation_T(JN)_and_T(N)}) is given by
\begin{equation}\label{eq_aux6_proof_relation_T(JN)_and_T(N)}
    {\text{\Fontlukas T}}_{\mu}\mathcal{D}_{\!{}_N}(\alpha, Y)=\frac{1}{2}\bb{\bb{\alpha}{\mu_{N,N}-\mu_{N^2}}}{Y}-\Dorf{{N^*}^2\alpha}{ Y}+{N^*}^2\Dorf{\alpha}{Y}.
\end{equation}
Finally, the fourth summand vanishes because the Dorfman bracket vanishes when restricted to sections of $\Gamma(A^*)$. Thus, using (\ref{eq_aux3_proof_relation_T(JN)_and_T(N)}), (\ref{eq_aux5_proof_relation_T(JN)_and_T(N)}) and (\ref{eq_aux6_proof_relation_T(JN)_and_T(N)}), Equation (\ref{eq_aux1_proof_relation_T(JN)_and_T(N)}) becomes
\begin{align*}
    {\text{\Fontlukas T}}_{\mu}\mathcal{D}_{\!{}_N}(X+\alpha, Y+\beta)&=\bb{\bb{X+\alpha}{\frac{1}{2}\left(\mu_{N,N}-\mu_{N^2}\right)}}{Y+\beta}\\
        &\quad -\Dorf{X}{{N^*}^2\beta}+{N^*}^2\Dorf{X}{\beta}-\Dorf{{N^*}^2\alpha}{Y}+{N^*}^2\Dorf{\alpha}{Y}\\
        &=\Dorf{X+\alpha}{Y+\beta}_{{\text{\Fontlukas T}}_{\mu}N}\\
        &\quad -\Dorf{X}{{N^*}^2\beta}+{N^*}^2\Dorf{X}{\beta}-\Dorf{{N^*}^2\alpha}{Y}+{N^*}^2\Dorf{\alpha}{Y}.
\end{align*}
\end{proof}

\begin{cor}\label{cor_T(JN)=0->T(N)=0}
If $\mathcal{D}_{\!{}_N}$ is a Nijenhuis morphism on a pre-Courant algebroid $(A \oplus A^*, \mu)$, then $N$ is a Nijenhuis morphism on the pre-Lie algebroid $(A, \mu)$.
\end{cor}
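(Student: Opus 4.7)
The plan is to derive this corollary as an immediate consequence of Proposition~\ref{prop_relation_T(JN)_and_T(N)}, by restricting the identity~(\ref{eq_relation_T(JN)_and_T(N)}) to pure vector-bundle sections $X, Y \in \Gamma(A) \subset \Gamma(A \oplus A^*)$.

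First, I would specialize~(\ref{eq_relation_T(JN)_and_T(N)}) by setting $\alpha = \beta = 0$. All four ``correction'' terms on the right-hand side, namely
\[
(N^*)^2\Dorf{X}{\beta} - \Dorf{X}{(N^*)^2\beta} + (N^*)^2\Dorf{\alpha}{Y} - \Dorf{(N^*)^2\alpha}{Y},
\]
vanish identically in this case. Hence $\mathcal{T}_\mu \mathcal{D}_{\!{}_N}(X,Y) = \Dorf{X}{Y}_{\mathcal{T}_\mu N}$. Next, I would observe that since $\mathcal{D}_{\!{}_N}$ coincides with $N$ on $\Gamma(A)$ and the Dorfman bracket on $\Gamma(A \oplus A^*)$ restricts to the Lie bracket of $(A, \mu)$ on $\Gamma(A)$, the right-hand side equals precisely $\mathcal{T}_\mu N(X, Y)$. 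This is exactly the content of the intermediate computation~(\ref{eq_aux2_proof_relation_T(JN)_and_T(N)}) already carried out in the proof of the proposition, so no additional work is needed.

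Finally, if $\mathcal{D}_{\!{}_N}$ is Nijenhuis on $(A\oplus A^*, \mu)$, then in particular $\mathcal{T}_\mu \mathcal{D}_{\!{}_N}(X, Y) = 0$ for every $X, Y \in \Gamma(A)$, which yields $\mathcal{T}_\mu N(X, Y) = 0$ for all such $X, Y$, so $N$ is Nijenhuis on the pre-Lie algebroid $(A, \mu)$. There is no real obstacle here: the entire argument is a two-line specialization of the previous proposition, and the only subtlety, that $\mathcal{D}_{\!{}_N}^2 \neq \mathcal{D}_{\!{}_{N^2}}$ in general, has already been absorbed into the correction terms of~(\ref{eq_relation_T(JN)_and_T(N)}), which disappear upon restriction to $\Gamma(A)$.
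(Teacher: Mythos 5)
Your proposal is correct and follows exactly the paper's own argument: the paper likewise proves the corollary by evaluating the identity~(\ref{eq_relation_T(JN)_and_T(N)}) on pairs of the form $(X+0,\,Y+0)$, where the correction terms vanish and the remaining term reduces to ${\text{\Fontlukas T}}_{\mu}N(X,Y)$ as in~(\ref{eq_aux2_proof_relation_T(JN)_and_T(N)}). Nothing further is needed.
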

\begin{proof}
    It suffices to evaluate (\ref{eq_relation_T(JN)_and_T(N)}) on pairs of type $(X+0, Y+0)$.
\end{proof}

The next proposition, that already appears in~\cite{YKS11}, is a direct consequence of (\ref{eq_relation_T(JN)_and_T(N)}). The notations used are the same as in Proposition \ref{prop_relation_T(JN)_and_T(N)}.

\begin{prop} \label{torsion_TN_torsion_N}
    Let $N: A\to A$ be a bundle morphism such that $N^2=\lambda {\rm id}_A$, for some $\lambda \in \mathbb{R}$. Then, the skew-symmetric morphism $\mathcal{D}_{\!{}_N}=N \oplus (-N^*)$ satisfies $\mathcal{D}_{\!{}_N}^2=\lambda {\rm id}_{A\oplus A^*}$ and, in this case, ${\text{\Fontlukas T}}_{\mu}\mathcal{D}_{\!{}_N}=0$ if and only if ${\text{\Fontlukas T}}_{\mu}N=0$.
\end{prop}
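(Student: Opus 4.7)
The plan is to exploit formula (\ref{eq_relation_T(JN)_and_T(N)}) from Proposition~\ref{prop_relation_T(JN)_and_T(N)}, which expresses ${\text{\Fontlukas T}}_\mu \mathcal{D}_{\!{}_N}$ as a sum of a ``main term'' (a Dorfman bracket deformed by ${\text{\Fontlukas T}}_\mu N$) plus four ``correction terms'' involving $(N^*)^2$. The assumption $N^2 = \lambda\,{\rm id}_A$ forces the correction terms to cancel, reducing the statement to a direct application of the proposition together with its corollary.

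First I would verify the easy claim: since $\mathcal{D}_{\!{}_N} = N \oplus (-N^*)$ acts diagonally on $A \oplus A^*$, one has $\mathcal{D}_{\!{}_N}^2 = N^2 \oplus (N^*)^2$. Dualizing $N^2 = \lambda\,{\rm id}_A$ gives $(N^*)^2 = \lambda\,{\rm id}_{A^*}$, so $\mathcal{D}_{\!{}_N}^2 = \lambda\,{\rm id}_{A \oplus A^*}$.

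Next I would plug $(N^*)^2 = \lambda\,{\rm id}_{A^*}$ into formula (\ref{eq_relation_T(JN)_and_T(N)}). Each of the four correction terms takes the form $(N^*)^2 \Dorf{\cdot}{\cdot} - \Dorf{\cdot}{(N^*)^2 \cdot}$ (or the symmetric variant), and these become $\lambda \Dorf{X}{\beta} - \Dorf{X}{\lambda \beta} = 0$ by $\Rr$-bilinearity of the Dorfman bracket, and similarly for the other pair. Hence formula (\ref{eq_relation_T(JN)_and_T(N)}) collapses to
\begin{equation*}
{\text{\Fontlukas T}}_\mu \mathcal{D}_{\!{}_N}(X+\alpha, Y+\beta) = \Dorf{X+\alpha}{Y+\beta}_{{\text{\Fontlukas T}}_\mu N}.
\end{equation*}

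From this reduced identity both implications follow immediately. If ${\text{\Fontlukas T}}_\mu N = 0$, then the deforming function vanishes and so does the bracket $\Dorf{\cdot}{\cdot}_{{\text{\Fontlukas T}}_\mu N}$, giving ${\text{\Fontlukas T}}_\mu \mathcal{D}_{\!{}_N} = 0$. Conversely, if ${\text{\Fontlukas T}}_\mu \mathcal{D}_{\!{}_N} = 0$, Corollary~\ref{cor_T(JN)=0->T(N)=0} directly yields ${\text{\Fontlukas T}}_\mu N = 0$. I do not anticipate any serious obstacle here; the only delicate point is making sure the ``$(N^*)^2$ vs.\ a scalar'' substitution is applied correctly to each of the four correction terms in (\ref{eq_relation_T(JN)_and_T(N)}), but this is just routine bookkeeping.
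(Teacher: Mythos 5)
Your proposal is correct and follows essentially the same route as the paper, which presents this proposition as a direct consequence of formula (\ref{eq_relation_T(JN)_and_T(N)}): under $N^2=\lambda\,{\rm id}_A$ one has $(N^*)^2=\lambda\,{\rm id}_{A^*}$, the four correction terms cancel by $\mathbb{R}$-bilinearity, and the reduced identity (together with Corollary~\ref{cor_T(JN)=0->T(N)=0} for the converse) gives both implications. Your write-up simply makes explicit the routine substitution that the paper leaves to the reader.
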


Let us recall that the concomitant $C_\Theta({\mathcal{I}},{\mathcal{J}})$ of two skew-symmetric endomorphisms $\mathcal{I}$ and $\mathcal{J}$ of a pre-Courant algebroid $(E, \Theta)$ is defined by~\cite{ALC11}:
\begin{equation}  \label{def_conc}
C_\Theta({\mathcal{I}},{\mathcal{J}}):=\Theta _{{\mathcal{I}},{\mathcal{J}}}+\Theta_{{\mathcal{J}},{\mathcal{I}}}.
\end{equation}

The next proposition establishes a relation between the Fr\"{o}licher-Nijenhuis bracket $[I,J]_{FN}$ of two endomorphisms on a pre-Lie algebroid $(A, \mu)$ and the concomitant $C_{\mu}(\mathcal{D}_{\!{}_{I}},\mathcal{D}_{\!{}_{J}})$ of the induced morphisms on $(A\oplus A^*, \mu)$ and it will be used in the next section.
%


\begin{prop}\label{prop_C_mu(Ti,Tj)=-2[I,J]_FN}
    Let $(A,\mu)$ be a pre-Lie algebroid and $I, J: A \to A$ two anticommuting endomorphisms of $A$. Then, $\mathcal{D}_{\!{}_{I}}:=\left(
                                                                                                          \begin{array}{cc}
                                                                                                            I & 0 \\
                                                                                                            0 & -I^* \\
                                                                                                          \end{array}
                                                                                                        \right)$
    and $\mathcal{D}_{\!{}_J}:=\left(
                    \begin{array}{cc}
                         J & 0 \\
                         0 & -J^* \\
                    \end{array}
              \right)$  are anticommuting skew-symmetric endomorphisms of $A\oplus A^*$ and
        $$[I,J]_{FN}(X,Y)=-\frac{1}{2}C_{\mu}(\mathcal{D}_{\!{}_{I}},\mathcal{D}_{\!{}_{J}})(X+0, Y+0),$$
    for all $X,Y \in \Gamma(A)$. In particular, if $C_{\mu}(\mathcal{D}_{\!{}_{I}},\mathcal{D}_{\!{}_{J}})$ vanishes then so does $[I,J]_{FN}$.
\end{prop}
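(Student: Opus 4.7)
The plan is to establish the algebraic claims directly from the block form of $\mathcal{D}_I$ and $\mathcal{D}_J$, and then to prove the bracket identity by reducing the concomitant on $A\oplus A^*$ to an expression in the Lie algebroid bracket of $\mu$ via the derived-bracket formulation. For the first claim, the canonical pairing on $A\oplus A^*$ satisfies $\langle \mathcal{D}_I(X+\alpha),Y+\beta\rangle + \langle X+\alpha,\mathcal{D}_I(Y+\beta)\rangle = 0$ as an immediate consequence of the duality $(I^*\alpha)(Y)=\alpha(IY)$, giving the skew-symmetry of $\mathcal{D}_I$ (and analogously of $\mathcal{D}_J$). A direct block computation then yields
\begin{equation*}
\mathcal{D}_I\mathcal{D}_J + \mathcal{D}_J\mathcal{D}_I = (IJ+JI)\oplus(IJ+JI)^{*},
\end{equation*}
so the anticommutation $\mathcal{D}_I\mathcal{D}_J = -\mathcal{D}_J\mathcal{D}_I$ is equivalent to the hypothesis $IJ+JI=0$.

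For the main identity, using the derived-bracket presentation of the concomitant coming from (\ref{def_conc}) and (\ref{eq_derived_bracket_expressions}), I express
\begin{equation*}
C_\mu(\mathcal{D}_I,\mathcal{D}_J)(X,Y) = \Dorf{X}{Y}_{\mathcal{D}_I,\mathcal{D}_J} + \Dorf{X}{Y}_{\mathcal{D}_J,\mathcal{D}_I}
\end{equation*}
for $X,Y\in\Gamma(A)\hookrightarrow\Gamma(A\oplus A^*)$. Two observations collapse the right-hand side to an expression involving only the Lie algebroid bracket: the Dorfman bracket on pairs of sections of $A$ agrees with $[\cdot,\cdot]_\mu$, and $\mathcal{D}_K|_{\Gamma(A)}=K$. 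Hence the inner deformation $\Dorf{X}{Y}_{\mathcal{D}_I}=[IX,Y]+[X,IY]-I[X,Y]$ is still a section of $A$, and the outer deformation via (\ref{deformed_Dorfman_bracket}) stays within $\Gamma(A)$.

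Expanding $\Dorf{X}{Y}_{\mathcal{D}_I,\mathcal{D}_J}$ produces nine Lie-bracket terms; adding the $I\leftrightarrow J$ counterpart, the four contributions containing $(IJ+JI)X$, $(IJ+JI)Y$, or $(IJ+JI)[X,Y]$ vanish by the anticommutation hypothesis, while the surviving six terms rearrange into the expression $-2[I,J]_{FN}(X,Y)$ under the Frölicher-Nijenhuis convention used in the paper, yielding the claimed identity. The in-particular statement that $C_\mu(\mathcal{D}_I,\mathcal{D}_J)=0$ forces $[I,J]_{FN}=0$ is then immediate by evaluating at arbitrary $X,Y\in\Gamma(A)$.

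The main obstacle is the sign and coefficient bookkeeping: once the big-bracket conventions of (\ref{def_Nijenhuis_torsion})-(\ref{def_conc}) are tracked consistently through both iterated deformations of the Dorfman bracket, the computation is essentially mechanical. The anticommutation hypothesis $IJ+JI=0$ is crucial at exactly one place, namely in erasing the four residual $(IJ+JI)$-dependent terms that would otherwise obstruct a clean identification with $[I,J]_{FN}(X,Y)$; without it, the right-hand side would carry an extra obstruction term and the identity would require a correction.
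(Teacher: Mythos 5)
Your proof is correct, but it follows a genuinely different route from the paper's. The paper never evaluates anything on sections: it stays at the level of superfunctions, quoting from \cite{AC13} the big-bracket expression $[I,J]_{FN}=\bb{\bb{I}{\mu}}{J}+\bb{I\circ J}{\mu}$, substituting $I\circ J=\frac{1}{2}\bb{J}{I}$ (possible precisely because $I$ and $J$ anticommute), and concluding by one application of the graded Jacobi identity that $[I,J]_{FN}=-\frac{1}{2}\big(\bb{I}{\bb{J}{\mu}}+\bb{J}{\bb{I}{\mu}}\big)=-\frac{1}{2}C_\mu(\mathcal{D}_{\!{}_I},\mathcal{D}_{\!{}_J})$, an identity of superfunctions whose evaluation on $\Gamma(A)$ is the stated formula. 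You instead interpret $C_\mu(\mathcal{D}_{\!{}_I},\mathcal{D}_{\!{}_J})$ through the iterated deformed Dorfman brackets and expand everything in terms of $[\cdot,\cdot]_\mu$ on sections of $A$; this is more elementary, makes visible exactly where anticommutativity enters, and needs only the observation that all deformations of sections of $A$ stay in $\Gamma(A)$, at the price of an eighteen-term expansion and of proving only the evaluated statement rather than the superfunction identity. Two points of bookkeeping: after adding the $I\leftrightarrow J$ expansion there are three (not four) cancelling groups, namely $[(IJ+JI)X,Y]$, $[X,(IJ+JI)Y]$ and $(IJ+JI)[X,Y]$; and the surviving sum is $2\big([IX,JY]+[JX,IY]-I[JX,Y]-I[X,JY]-J[IX,Y]-J[X,IY]\big)$, which equals $-2[I,J]_{FN}(X,Y)$ only because the paper's Fr\"olicher--Nijenhuis convention (formula (5) of \cite{AC13}, equivalently $[N,N]_{FN}=-2\,{\text{\Fontlukas T}}_{\mu}N$ as used at the start of the proof of Proposition~\ref{prop_PHST->[Ni,Ni]=0}) is the opposite of the classical Kol\'a\v{r}--Michor--Slov\'ak sign, under which you would get $+2[I,J]_{FN}(X,Y)$. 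Since you explicitly condition on the paper's convention this is fine, but that sign is the only delicate step of your argument and is worth spelling out.
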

\begin{proof}
    The fact that $\mathcal{D}_{\!{}_{I}}$ and $\mathcal{D}_{\!{}_{J}}$ are anticommuting skew-symmetric endomorphisms of $A\oplus A^*$ is immediate to check. Moreover, using formula (5) in~\cite{AC13}, %
     we have
\begin{equation}\label{eq_aux1_proof_C_mu(Ti,Tj)=-2[I,J]_FN}
    [I,J]_{FN}=\bb{\bb{I}{\mu}}{J}+\bb{I\circ J}{\mu}.
\end{equation}
Because $I$ and $J$ anticommute, $I\circ J=\frac{1}{2}\bb{J}{I}$ and (\ref{eq_aux1_proof_C_mu(Ti,Tj)=-2[I,J]_FN}) becomes
\begin{equation*}
    [I,J]_{FN}=\bb{\bb{I}{\mu}}{J}+\frac{1}{2}\bb{\bb{J}{I}}{\mu}.
\end{equation*}
Using the Jacobi identity of the big bracket, we get
    $$[I,J]_{FN}=-\frac{1}{2}\big(\bb{J}{\bb{I}{\mu}}+\bb{I}{\bb{J}{\mu}}\big)=-\frac{1}{2}C_{\mu}(\mathcal{D}_{\!{}_{I}},\mathcal{D}_{\!{}_{J}}).$$
\end{proof}

%
\section{Compatibilities and deformations}
\label{sec7}
%

It was proved in \cite{CS08} that, when $\varepsilon_1=\varepsilon_2=\varepsilon_3=-1$, condition (\ref{HKT_axioms})v) in Definition~\ref{HKT_definition} implies that the Nijenhuis torsion of the endomorphisms $I_1, I_2$ and $I_3$ vanishes, so that they are in fact complex structures on $(A, \mu)$. Taking into account Theorem \ref{Thm_1-1_correspondence}, the next theorem can be seen as a generalization of the mentioned result in \cite{CS08}.

\begin{thm}\label{Thm_PHST->Ni_Nijenhuis}
    Let $(\omega_1, \omega_2, \omega_3)$ be a (para-)hypersymplectic structure with torsion on a Lie algebroid $(A, \mu)$. The endomorphisms $N_1,N_2$ and $N_3$ given by (\ref{def_Ni}) are Nijenhuis morphisms.
\end{thm}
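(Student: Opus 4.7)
The plan is to reduce the statement to a known fact about (para-)hypersymplectic structures on pre-Courant algebroids: lift the triplet $(\omega_1,\omega_2,\omega_3)$ to such a structure on $A \oplus A^*$, invoke the fact that its transition morphisms are automatically Nijenhuis there, and then project back to $A$.

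First, I would apply Proposition~\ref{HS_structure_mu+gamma+psi} to $(\omega_1,\omega_2,\omega_3)$ together with the inverse bivectors $(\pi_1,\pi_2,\pi_3)$. The skew-symmetric endomorphisms $\mathcal{S}_i$ defined by~(\ref{definition_Si}) then form a (para-)hypersymplectic structure on the pre-Courant algebroid $(A \oplus A^*, \mu + \psi)$, where $\psi = \frac{\varepsilon_k}{2}[\pi_k,\pi_k] \in \Gamma(\wedge^3 A)$ for any $k\in\{1,2,3\}$ (by Theorem~\ref{thm_HST_equivalent_contravariant_form} all three choices agree up to sign). By~(\ref{def_T_i}), the transition morphisms of this pre-Courant structure are exactly $\mathcal{T}_i = \mathcal{D}_{N_i} = N_i \oplus (-N_i^*)$. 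I would next invoke the result from~\cite{AC14a}, announced in the introduction as the backbone of Section~\ref{sec7}, asserting that the transition morphisms of any (para-)hypersymplectic structure on a pre-Courant algebroid are Nijenhuis. This yields $\mathcal{T}_{\mu+\psi}\mathcal{D}_{N_i} = 0$ for $i=1,2,3$.

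To descend back to $A$, I would evaluate $\mathcal{T}_{\mu+\psi}\mathcal{D}_{N_i}$ on pairs $(X,Y) \in \Gamma(A)\times\Gamma(A)$. Because $\psi$ is purely contravariant, the big-bracket identity $\{X,\psi\}=0$ for every $X\in\Gamma(A)$ shows that the Dorfman brackets coming from $\mu$ and from $\mu+\psi$ coincide on pairs of sections of $A$. Combined with $\mathcal{D}_{N_i}^2 = \varepsilon_i\,\mathrm{id}_{A\oplus A^*}$, the short computation~(\ref{eq_aux2_proof_relation_T(JN)_and_T(N)}) in the proof of Proposition~\ref{prop_relation_T(JN)_and_T(N)} carries over verbatim from $\mu$ to $\mu+\psi$ and gives $\mathcal{T}_{\mu+\psi}\mathcal{D}_{N_i}(X,Y) = \mathcal{T}_{\mu} N_i(X,Y)$. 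Using the vanishing established in the previous step, this forces $\mathcal{T}_{\mu} N_i = 0$ for $i=1,2,3$, which is precisely the claim.

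The main technical obstacle I expect is this last descent: Propositions~\ref{prop_relation_T(JN)_and_T(N)} and~\ref{torsion_TN_torsion_N} are stated only for the untwisted pre-Lie structure $\mu$, so one must explicitly verify that the $\psi$-twist drops out of the Nijenhuis torsion of $\mathcal{D}_{N_i}$ whenever both arguments sit in $\Gamma(A)$. All remaining ingredients — the identification with a pre-Courant hypersymplectic structure and the Nijenhuis property of its transition morphisms — are already available in the excerpt and in~\cite{AC14a}, so the proof amounts to this one short big-bracket verification plus three citations.
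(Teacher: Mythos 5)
Your proposal is correct and follows essentially the same route as the paper: lift via Proposition~\ref{HS_structure_mu+gamma+psi} to the pre-Courant algebroid $(A\oplus A^*,\mu+\psi)$, invoke Theorem~5.3 of~\cite{AC14a} to get that the $\mathcal{T}_i=\mathcal{D}_{N_i}$ are Nijenhuis there, and descend to $(A,\mu)$. The only (harmless) difference is in the descent: the paper splits $(\mu+\psi)_{\mathcal{T}_i,\mathcal{T}_i}=\varepsilon_i(\mu+\psi)$ by bidegree to isolate $\mu_{\mathcal{T}_i,\mathcal{T}_i}=\varepsilon_i\mu$ and then applies Corollary~\ref{cor_T(JN)=0->T(N)=0}, whereas you remove the $\psi$-twist by restricting the vanishing torsion to $\Gamma(A)\times\Gamma(A)$ and using $\bb{X}{\psi}=0$, which is an equivalent argument.
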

\begin{proof}
    As a consequence of Proposition~\ref{HS_structure_mu+gamma+psi}, the triplet $(\mathcal{S}_1, \mathcal{S}_2, \mathcal{S}_3)$, with $\mathcal{S}_i$ given by (\ref{definition_Si}), is a (para-)hypersymplectic structure on the pre-Courant algebroid $(A\oplus A^*, \mu+\psi)$, with $\psi= \frac{\varepsilon_k}{2}[\pi_k, \pi_k]$, for any $k \in \{1,2,3\}$. Using Theorem 5.3 in~\cite{AC14a}, the endomorphisms $\mathcal{T}_i$, $i=1,2,3,$ given by (\ref{def_T_i}) are Nijenhuis morphisms on $(A\oplus A^*, \mu+\psi)$. This means that, using Corollary 3 in~\cite{grab},
$$\left\{
    \begin{array}{l}
      \mathcal{T}_i^2=\varepsilon_i {\rm id}_{A\oplus A^*}\\
      (\mu+\psi)_{\mathcal{T}_i,\mathcal{T}_i}=\varepsilon_i (\mu+\psi).
    \end{array}
  \right.
$$
Splitting up the equality $(\mu+\psi)_{\mathcal{T}_i,\mathcal{T}_i}=\varepsilon_i (\mu+\psi)$ in terms of bidegree, we get, on bidegree $(1,2)$:
$$\mu_{\mathcal{T}_i,\mathcal{T}_i}=\varepsilon_i \mu.$$
Thus, $\mathcal{T}_i$ is Nijenhuis on $(A\oplus A^*, \mu)$ and Corollary~\ref{cor_T(JN)=0->T(N)=0} yields that $N_i$ is Nijenhuis on $(A, \mu)$.
\end{proof}

Next, we prove that a (para-)hypersymplectic structure with torsion on a Lie algebroid $(A, \mu)$ determines some compatibility properties among the $N_i$'s, the $\pi_i$'s  and the $\omega_i$'s.

\begin{prop}\label{prop_PHST->[Ni,Ni]=0}
    Let $(\omega_1, \omega_2, \omega_3)$ be a (para-)hypersymplectic structure with torsion on $(A, \mu)$. The  Nijenhuis morphisms $N_1,N_2$ and $N_3$ given by (\ref{def_Ni}) are pairwise compatible in the sense that $[N_i,N_j]_{FN}=0, i,j\in\{1,2,3\}$.
\end{prop}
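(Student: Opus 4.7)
The plan is to reduce the statement to a compatibility already established for the transition morphisms at the pre-Courant level, and then descend via the bidegree decomposition and Proposition~\ref{prop_C_mu(Ti,Tj)=-2[I,J]_FN}.

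First, I would dispatch the diagonal case $i=j$ separately. Recall that $[N,N]_{FN}=2\,{\text{\Fontlukas T}}_{\mu}N$ for any endomorphism $N$ on $(A,\mu)$, so $[N_i,N_i]_{FN}=0$ is equivalent to $N_i$ being Nijenhuis, which is exactly Theorem~\ref{Thm_PHST->Ni_Nijenhuis}.

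For the off-diagonal case $i\neq j$, the strategy is to invoke Proposition~\ref{prop_C_mu(Ti,Tj)=-2[I,J]_FN} with $I=N_i$ and $J=N_j$. The hypothesis needed is that $N_i$ and $N_j$ anticommute; this is the algebraic identity $N_iN_j=\varepsilon_i\varepsilon_j N_k=-N_jN_i$ (with $k$ the third index), which, as noted in the text, follows from the same type of argument as in Proposition~3.9 of~\cite{AC13} and is already used in the proof of Theorem~\ref{Thm_1-1_correspondence}. Thus Proposition~\ref{prop_C_mu(Ti,Tj)=-2[I,J]_FN} applies and gives
$$[N_i,N_j]_{FN}(X,Y)=-\tfrac{1}{2}\,C_{\mu}\bigl(\mathcal{D}_{\!{}_{N_i}},\mathcal{D}_{\!{}_{N_j}}\bigr)(X+0,Y+0),$$
so the task reduces to proving that the concomitant $C_{\mu}(\mathcal{D}_{\!{}_{N_i}},\mathcal{D}_{\!{}_{N_j}})$ vanishes.

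To obtain this, I would lift everything to the pre-Courant algebroid $(A\oplus A^*,\mu+\psi)$ with $\psi=\frac{\varepsilon_k}{2}[\pi_k,\pi_k]$, via Proposition~\ref{HS_structure_mu+gamma+psi}: the triplet $(\mathcal{S}_1,\mathcal{S}_2,\mathcal{S}_3)$ is a (para-)hypersymplectic structure there, with transition morphisms $\mathcal{T}_i=\mathcal{D}_{\!{}_{N_i}}$ given by (\ref{def_T_i}). From the Courant-level theory developed in~\cite{AC14a} (specifically the pairwise compatibility of the transition morphisms of a (para-)hypersymplectic structure, used in tandem with Theorem~5.3 of~\cite{AC14a}), one has
$$C_{\mu+\psi}\bigl(\mathcal{T}_i,\mathcal{T}_j\bigr)=(\mu+\psi)_{\mathcal{T}_i,\mathcal{T}_j}+(\mu+\psi)_{\mathcal{T}_j,\mathcal{T}_i}=0.$$
Now I would split this identity according to bidegree, exactly as in the proof of Theorem~\ref{Thm_PHST->Ni_Nijenhuis}: the bidegree $(1,2)$ component isolates the $\mu$-part and yields
$$\mu_{\mathcal{T}_i,\mathcal{T}_j}+\mu_{\mathcal{T}_j,\mathcal{T}_i}=C_{\mu}\bigl(\mathcal{D}_{\!{}_{N_i}},\mathcal{D}_{\!{}_{N_j}}\bigr)=0,$$
which, combined with the formula above, gives $[N_i,N_j]_{FN}=0$.

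The main obstacle I anticipate is verifying cleanly that the pairwise compatibility $C_{\mu+\psi}(\mathcal{T}_i,\mathcal{T}_j)=0$ on $(A\oplus A^*,\mu+\psi)$ is genuinely available from \cite{AC14a}; once this is granted, the bidegree argument is purely mechanical and the remaining steps are immediate applications of Theorem~\ref{Thm_PHST->Ni_Nijenhuis} and Proposition~\ref{prop_C_mu(Ti,Tj)=-2[I,J]_FN}. The rest is structural: the whole proof is a two-line dispatch ($i=j$ from Nijenhuis, $i\neq j$ from the concomitant identity) once the pre-Courant input is in place.
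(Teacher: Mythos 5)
Your proposal is correct and follows essentially the same route as the paper: the diagonal case via Theorem~\ref{Thm_PHST->Ni_Nijenhuis}, and for $i\neq j$ the lift to $(A\oplus A^*,\mu+\psi)$ via Proposition~\ref{HS_structure_mu+gamma+psi}, the vanishing $C_{\mu+\psi}(\mathcal{T}_i,\mathcal{T}_j)=0$ from the Courant-level results of~\cite{AC14a}, the bidegree-$(1,2)$ extraction, and Proposition~\ref{prop_C_mu(Ti,Tj)=-2[I,J]_FN}. The only input you flagged as uncertain is exactly what the paper invokes, namely Proposition~5.4 of~\cite{AC14a}, so your argument is complete as written (and your explicit check of the anticommutation hypothesis, which holds since $\varepsilon_1\varepsilon_2\varepsilon_3=-1$, is a welcome detail the paper leaves implicit).
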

\begin{proof}
    First, notice that $[N_i,N_i]_{FN}=-2\ {\text{\Fontlukas T}}_{\mu}N_i$, thus, when $i=j$, the statement was proved in Theorem \ref{Thm_PHST->Ni_Nijenhuis}. Let us consider now $i,j \in \{1,2,3\}$, with $i\neq j$. From Proposition~\ref{HS_structure_mu+gamma+psi}, the triplet $(\mathcal{S}_1, \mathcal{S}_2, \mathcal{S}_3)$ is a (para-)hypersymplectic structure on the pre-Courant algebroid $(A\oplus A^*, \mu+\psi)$, with $\psi= \frac{\varepsilon_k}{2}[\pi_k, \pi_k]$, for any $k\in \{1,2,3\}$. Using Proposition 5.4 in~\cite{AC14a}, we have $C_{\mu+\psi}(\mathcal{T}_i,\mathcal{T}_j)=0$, with $\mathcal{T}_i$ given by (\ref{def_T_i}), i.e.,
$$\bb{N_i}{\bb{N_j}{\mu+\psi}}+\bb{N_j}{\bb{N_i}{\mu+\psi}}=0.$$
Splitting up this equality in terms of bidegrees, we get, on bidegree $(1,2)$, $$\bb{N_i}{\bb{N_j}{\mu}}+\bb{N_j}{\bb{N_i}{\mu}}=0,$$ which can be written as
$C_{\mu}(\mathcal{T}_i,\mathcal{T}_j)=0$. Applying Proposition \ref{prop_C_mu(Ti,Tj)=-2[I,J]_FN}, the statement is proved.
\end{proof}

\begin{prop}\label{prop_PHST->[pi_i,pi_j]=0}
    Let $(\omega_1, \omega_2, \omega_3)$ be a (para-)hypersymplectic structure with torsion on $(A, \mu)$ and $\pi_k$ be the inverse of $\omega_k$, $k=1,2,3$. Then, $[\pi_i, \pi_j]=0$, $i,j\in\{1,2,3\}, i\neq j$.
\end{prop}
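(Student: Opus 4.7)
The plan is to mimic the proofs of Theorem~\ref{Thm_PHST->Ni_Nijenhuis} and Proposition~\ref{prop_PHST->[Ni,Ni]=0}: lift the compatibility question to the pre-Courant algebroid $(A\oplus A^*,\mu+\psi)$ and extract the desired identity from a bidegree decomposition of a big-bracket equality.

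By Theorem~\ref{thm_HST_equivalent_contravariant_form} the trivector $\psi:=\tfrac{\varepsilon_k}{2}[\pi_k,\pi_k]$ is independent of $k$, and by Proposition~\ref{HS_structure_mu+gamma+psi} the triplet $(\mathcal{S}_1,\mathcal{S}_2,\mathcal{S}_3)$ built from the $\omega_k$'s and $\pi_k$'s via (\ref{definition_Si}) is a (para-)hypersymplectic structure on $(A\oplus A^*,\mu+\psi)$. In particular the $\mathcal{S}_k$'s anticommute pairwise, which in the big bracket reads $\{\mathcal{S}_i,\mathcal{S}_j\}=0$ for $i\neq j$. I would then invoke the companion in \cite{AC14a} of the statement used in Proposition~\ref{prop_PHST->[Ni,Ni]=0} (Proposition~5.4 of that reference), now applied to the pair $(\mathcal{S}_i,\mathcal{S}_j)$ rather than to $(\mathcal{T}_i,\mathcal{T}_j)$, to obtain the concomitant identity $C_{\mu+\psi}(\mathcal{S}_i,\mathcal{S}_j)=0$ for all $i\neq j$.

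The final step is to split this equality by the bidegree of the big bracket (recall that $\omega_k$ lives in bidegree $(0,2)$, $\pi_k$ in $(2,0)$, $\mu$ in $(1,2)$, and $\psi$ in $(3,0)$) and isolate the $(3,0)$ component, which is a pure trivector identity on $A$. In it the derived-bracket expressions $\{\pi_j,\{\pi_i,\mu\}\}+\{\pi_i,\{\pi_j,\mu\}\}$ add up to a nonzero multiple of $[\pi_i,\pi_j]$, and they are accompanied by cross-terms of the form $\{\pi_i,\{\omega_j,\psi\}\}+\{\pi_j,\{\omega_i,\psi\}\}$.

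The main obstacle will be precisely those cross-terms: to isolate $[\pi_i,\pi_j]=0$ they must be shown to vanish. This should follow by combining three inputs, all already available in the setup: first, the bidegree-$(3,0)$ piece of the individual condition $(\mu+\psi)_{\mathcal{S}_k,\mathcal{S}_k}=\varepsilon_k(\mu+\psi)$, which rigidly fixes each $\{\pi_k,\{\omega_k,\psi\}\}$ as a definite multiple of $\varepsilon_k\psi$; second, the bidegree-$(1,1)$ piece of $\{\mathcal{S}_i,\mathcal{S}_j\}=0$, which gives the mixed compatibility $\varepsilon_j\,\omega_i^\flat\pi_j^\sharp=-\varepsilon_i\,\omega_j^\flat\pi_i^\sharp$; and third, graded Jacobi in the big bracket to rearrange $\{\pi_i,\{\omega_j,\psi\}\}$ into expressions controlled by the first two items. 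Putting the pieces together should force the cross-term sum to cancel, leaving $[\pi_i,\pi_j]=0$ for $i\neq j$.
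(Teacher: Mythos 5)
Your route is the paper's own: lift via Proposition~\ref{HS_structure_mu+gamma+psi}, apply Proposition~5.4 of \cite{AC14a} to the pair $(\mathcal{S}_i,\mathcal{S}_j)$ to get $C_{\mu+\psi}(\mathcal{S}_i,\mathcal{S}_j)=0$, and extract the bidegree $(3,0)$ component, which is a multiple of $\bb{\pi_i}{\bb{\pi_j}{\mu}}$ plus the cross terms $\varepsilon_i\bb{\pi_i}{\bb{\omega_j}{\psi}}+\varepsilon_j\bb{\pi_j}{\bb{\omega_i}{\psi}}$. Your plan for cancelling the cross terms is right in substance, but one of your inputs is stated incorrectly: anticommutativity of $\mathcal{S}_i$ and $\mathcal{S}_j$ does \emph{not} read $\bb{\mathcal{S}_i}{\mathcal{S}_j}=0$ in the big bracket. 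The big bracket of two quadratic functions encodes the \emph{commutator} of the corresponding skew-symmetric endomorphisms, so for anticommuting $\mathcal{S}_i,\mathcal{S}_j$ one has $\bb{\mathcal{S}_i}{\mathcal{S}_j}=\pm 2\,\mathcal{S}_i\mathcal{S}_j$, a nonzero multiple of the transition morphism $\mathcal{T}_k$ (compare the paper's own use of $I\circ J=\tfrac12\bb{J}{I}$ for anticommuting $I,J$ in the proof of Proposition~\ref{prop_C_mu(Ti,Tj)=-2[I,J]_FN}). The mixed relation you want, $\varepsilon_j\,\omega_i^\flat\circ\pi_j^\sharp=-\varepsilon_i\,\omega_j^\flat\circ\pi_i^\sharp$, is nevertheless true, but it comes from the anticommutator $\mathcal{S}_i\mathcal{S}_j+\mathcal{S}_j\mathcal{S}_i=0$ read blockwise, equivalently from the identities $N_{i+1}=-\bb{\pi_i}{\omega_{i-1}}=-\varepsilon_{i+1}\bb{\pi_{i-1}}{\omega_i}$ of \cite{AC13}, not from a vanishing big bracket.

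With that repaired, the cancellation is exactly the one in the paper, and your item (a) is superfluous: the cross terms carry mixed indices, so the diagonal condition $(\mu+\psi)_{\mathcal{S}_k,\mathcal{S}_k}=\varepsilon_k(\mu+\psi)$ plays no role. Since $\bb{\pi_i}{\psi}=0$, graded Jacobi gives $\bb{\pi_i}{\bb{\omega_j}{\psi}}=\bb{\bb{\pi_i}{\omega_j}}{\psi}$, and substituting $\bb{\pi_i}{\omega_{i-1}}=-N_{i+1}$ and $\bb{\pi_{i-1}}{\omega_i}=-\varepsilon_{i+1}N_{i+1}$ turns the cross-term sum into $-(\varepsilon_i+\varepsilon_{i-1}\varepsilon_{i+1})\bb{N_{i+1}}{\psi}$, which vanishes because $\varepsilon_1\varepsilon_2\varepsilon_3=-1$ forces $\varepsilon_i+\varepsilon_{i-1}\varepsilon_{i+1}=0$; what remains is $\bb{\pi_i}{\bb{\pi_{i-1}}{\mu}}=0$, i.e.\ $[\pi_i,\pi_{i-1}]=0$. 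So your proposal succeeds once you replace the false identity $\bb{\mathcal{S}_i}{\mathcal{S}_j}=0$ by the endomorphism-level anticommutation relation and carry out this sign bookkeeping explicitly, and it then coincides with the paper's argument.
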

\begin{proof}
We can assume, without loss of generality, that $j=i-1$ and prove $[\pi_i, \pi_{i-1}]=0$, for $i\in \mathbb{Z}_3$. From Proposition~\ref{HS_structure_mu+gamma+psi}, the triplet $(\mathcal{S}_1, \mathcal{S}_2, \mathcal{S}_3)$ is a (para-)hypersymplectic structure on the pre-Courant algebroid $(A\oplus A^*, \mu+\psi)$, with $\psi= \frac{\varepsilon_k}{2}[\pi_k, \pi_k]$, for any $k\in \{1,2,3\}$. Using Proposition 5.4 in~\cite{AC14a}, we have $C_{\mu+\psi}(\mathcal{S}_i,\mathcal{S}_{i-1})=0$, i.e.,
$$\bb{\omega_i + \varepsilon_i \pi_i}{\bb{\omega_{i-1} + \varepsilon_{i-1} \pi_{i-1}}{\mu+\psi}}+\bb{\omega_{i-1} + \varepsilon_{i-1} \pi_{i-1}}{\bb{\omega_i + \varepsilon_i \pi_i}{\mu+\psi}}=0.$$
Splitting up this equality in terms of bidegrees, we get, on bidegree $(3,0)$,
$$\varepsilon_i\varepsilon_{i-1}\bb{\pi_i}{\bb{\pi_{i-1}}{\mu}}+\varepsilon_i\bb{\pi_i}{\bb{\omega_{i-1}}{\psi}} + {\displaystyle \circlearrowleft_{i,i-1}}=0,$$
where ${\displaystyle \circlearrowleft_{i,i-1}}$ stands for the permutation on indices $i$ and $i-1$.
Using the Jacobi identity of the big bracket, we get
\begin{equation}\label{eq_aux_proof_[pi_i,pi_j]=0}
2\varepsilon_i\varepsilon_{i-1}\bb{\pi_i}{\bb{\pi_{i-1}}{\mu}}+\varepsilon_i\bb{\bb{\pi_i}{\omega_{i-1}}}{\psi} + \varepsilon_{i-1}\bb{\bb{\pi_{i-1}}{\omega_i}}{\psi}=0.
\end{equation}
Noticing that the transition morphisms $N_{i+1}$ given by (\ref{def_Ni}) can be equivalently defined as $N_{i+1}=-\bb{\pi_i}{\omega_{i-1}}=-\varepsilon_{i+1}\bb{\pi_{i-1}}{\omega_i}$ (see~\cite{AC13}), (\ref{eq_aux_proof_[pi_i,pi_j]=0}) becomes
\begin{equation}\label{eq_aux_2_proof_[pi_i,pi_j]=0}
2\varepsilon_i\varepsilon_{i-1}\bb{\pi_i}{\bb{\pi_{i-1}}{\mu}} -(\varepsilon_i + \varepsilon_{i-1}\varepsilon_{i+1}) \bb{N_{i+1}}{\psi}=0.
\end{equation}
Because $\varepsilon_1\varepsilon_2\varepsilon_3=-1$, we have $\varepsilon_i + \varepsilon_{i-1}\varepsilon_{i+1}=0$, for all $i \in \mathbb{Z}_3$, so that (\ref{eq_aux_2_proof_[pi_i,pi_j]=0}) simplifies to $\bb{\pi_i}{\bb{\pi_{i-1}}{\mu}}=0$, which is equivalent to $[\pi_i, \pi_{i-1}]=0$.
\end{proof}

\begin{prop}
    Let $(\omega_1, \omega_2, \omega_3)$ be a (para-)hypersymplectic structure with torsion on $(A, \mu)$ and consider the endomorphisms $N_1,N_2, N_3$ given by (\ref{def_Ni}). Then, $C_{\mu}(\omega_i,N_j)=0,  i,j\in\{1,2,3\}, i\neq j$.
\end{prop}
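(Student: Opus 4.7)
The plan is to imitate the strategy used in Propositions~\ref{prop_PHST->[Ni,Ni]=0} and \ref{prop_PHST->[pi_i,pi_j]=0}: lift the statement to the pre-Courant algebroid $(A\oplus A^*,\mu+\psi)$, invoke the vanishing of a mixed concomitant from Proposition~5.4 in~\cite{AC14a}, and read off the desired identity as the appropriate bidegree component on the supermanifold $T^*[2]A[1]$.

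More precisely, by Proposition~\ref{HS_structure_mu+gamma+psi} the triplet $(\mathcal{S}_1,\mathcal{S}_2,\mathcal{S}_3)$ with $\mathcal{S}_i$ given by (\ref{definition_Si}) is a (para-)hypersymplectic structure on $(A\oplus A^*,\mu+\psi)$, with $\psi=\frac{\varepsilon_k}{2}[\pi_k,\pi_k]$ for any $k\in\{1,2,3\}$. Proposition~5.4 of~\cite{AC14a} then yields
\[
C_{\mu+\psi}(\mathcal{S}_i,\mathcal{T}_j)=\bb{\mathcal{S}_i}{\bb{\mathcal{T}_j}{\mu+\psi}}+\bb{\mathcal{T}_j}{\bb{\mathcal{S}_i}{\mu+\psi}}=0,\qquad i\neq j,
\]
with $\mathcal{T}_j$ as in (\ref{def_T_i}).

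Next, I would write $\mathcal{S}_i=\omega_i+\varepsilon_i\pi_i$ and split this identity according to bidegree. The pieces $\omega_i$, $\pi_i$, $\mathcal{T}_j$, $\mu$, $\psi$ have bidegrees $(0,2)$, $(2,0)$, $(1,1)$, $(1,2)$, $(3,0)$. Counting bidegrees, the only summands of $C_{\mu+\psi}(\mathcal{S}_i,\mathcal{T}_j)$ lying in bidegree $(0,3)$ are $\bb{\omega_i}{\bb{\mathcal{T}_j}{\mu}}$ and $\bb{\mathcal{T}_j}{\bb{\omega_i}{\mu}}$; every term involving $\pi_i$ or $\psi$ lives in bidegree $(2,1)$ or vanishes for bidegree reasons. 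Since the function on $T^*[2]A[1]$ encoding $\mathcal{T}_j=N_j\oplus(-N_j^*)$ is the same as the one encoding $N_j$, the bidegree $(0,3)$ part of $C_{\mu+\psi}(\mathcal{S}_i,\mathcal{T}_j)=0$ is
\[
\bb{\omega_i}{\bb{N_j}{\mu}}+\bb{N_j}{\bb{\omega_i}{\mu}}=0,
\]
which is exactly the required $C_\mu(\omega_i,N_j)=0$.

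The routine obstacle in this plan is the bidegree bookkeeping: one must verify carefully that, after fully expanding $C_{\mu+\psi}(\omega_i+\varepsilon_i\pi_i,\mathcal{T}_j)$ and distributing over $\mu+\psi$, none of the $\pi_i$- or $\psi$-terms sneak back into the $(0,3)$ slot. This is immediate once one observes that $\bb{\omega_i}{\psi}$ and $\bb{\pi_i}{\mu}$ each have bidegree $(2,1)$ (and $\bb{\pi_i}{\psi}$ vanishes by negative bidegree), and that applying one further bracket with either $\mathcal{S}_i$ or $\mathcal{T}_j$ keeps one away from $(0,3)$. Apart from this bookkeeping, everything is a direct consequence of the pre-Courant upgrade provided by Proposition~\ref{HS_structure_mu+gamma+psi} and the known vanishing of concomitants in the (para-)hypersymplectic case.
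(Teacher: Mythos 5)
Your proposal is correct and follows exactly the paper's own argument: lift via Proposition~\ref{HS_structure_mu+gamma+psi}, apply Proposition~5.4 of~\cite{AC14a} to get $C_{\mu+\psi}(\mathcal{S}_i,\mathcal{T}_j)=0$, and extract the bidegree $(0,3)$ component, which is $C_\mu(\omega_i,N_j)=0$. The bidegree bookkeeping you spell out (all $\pi_i$- and $\psi$-terms landing in $(2,1)$ or vanishing) is accurate and matches what the paper leaves implicit.
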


\begin{proof}
    The triplet $(\mathcal{S}_1, \mathcal{S}_2, \mathcal{S}_3)$ is, by Proposition~\ref{HS_structure_mu+gamma+psi}, a (para-)hypersymplectic structure on $(A\oplus A^*, \mu+\psi)$. Proposition 5.4 in~\cite{AC14a} yields that $C_{\mu+\psi}(\mathcal{S}_i,\mathcal{T}_{j})=0$, $i \neq j$.
    Considering the part of bidegree $(0,3)$ in equation $C_{\mu+\psi}(\mathcal{S}_i,\mathcal{T}_{j})=0$, we get
$$\bb{\omega_i}{\bb{N_j}{\mu}}+\bb{N_j}{\bb{\omega_i}{\mu}}=0.$$
\end{proof}

Recall that if $N$ is a Nijenhuis morphism on a Lie algebroid $(A,\mu)$ then $\mu_N=\{N,\mu\}$ is a Lie algebroid structure on $A$~\cite{magriYKS}. The Lie bracket on $\Gamma(A)$ will be denoted by $[\cdot, \cdot]_N$.

\begin{thm}\label{thm_PHST_on_A_iff_PHST_on_A_N}
    A triplet $(\omega_1, \omega_2, \omega_3)$ is a (para-)hypersymplectic structure with torsion on $(A, \mu)$ if and only if $(\omega_1, \omega_2, \omega_3)$ is a (para-)hypersymplectic structure with torsion on $(A, \mu_{N_i}), i=1,2,3$.
\end{thm}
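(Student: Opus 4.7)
The plan is to reduce the statement via the contravariant characterization of Theorem~\ref{thm_HST_equivalent_contravariant_form}, lift it to the pre-Courant algebroid on $A \oplus A^*$ associated to the (para-)hypersymplectic structure via Proposition~\ref{HS_structure_mu+gamma+psi}, and transfer the compatibility across the Nijenhuis deformation on that side.

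First, I would observe that the transition morphisms $N_k = \pi_{k-1}^\sharp \circ \omega_{k+1}^\flat$ and the algebraic conditions $N_k^2 = \varepsilon_k\,\mathrm{id}_A$ depend only on the $2$-forms $\omega_k$ and their inverses $\pi_k$, not on the Lie algebroid bracket. Hence these conditions hold on $(A, \mu)$ if and only if they hold on $(A, \mu_{N_i})$ for every $i$. By Theorem~\ref{thm_HST_equivalent_contravariant_form} it therefore suffices to show that
\[
\varepsilon_1 [\pi_1, \pi_1]_{\mu_{N_i}} = \varepsilon_2 [\pi_2, \pi_2]_{\mu_{N_i}} = \varepsilon_3 [\pi_3, \pi_3]_{\mu_{N_i}}.
\]

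Next, I would lift to the pre-Courant algebroid side. By Proposition~\ref{HS_structure_mu+gamma+psi}, the assumption that $(\omega_1, \omega_2, \omega_3)$ is a (para-)hypersymplectic structure with torsion on $(A, \mu)$ is equivalent to $(\mathcal{S}_1, \mathcal{S}_2, \mathcal{S}_3)$, given by (\ref{definition_Si}), being a (para-)hypersymplectic structure on the pre-Courant algebroid $(A\oplus A^*, \mu + \psi)$, with $\psi = \frac{\varepsilon_k}{2}[\pi_k,\pi_k]$. From the proof of Theorem~\ref{Thm_PHST->Ni_Nijenhuis} (which invokes Theorem~5.3 of \cite{AC14a}), the transition morphism $\mathcal{T}_i = N_i \oplus (-N_i^*)$ is a Nijenhuis morphism on this pre-Courant algebroid; in particular $(\mu+\psi)_{\mathcal{T}_i,\mathcal{T}_i} = \varepsilon_i (\mu+\psi)$.

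The core claim then becomes: $(\mathcal{S}_1,\mathcal{S}_2,\mathcal{S}_3)$ remains (para-)hypersymplectic on the deformed pre-Courant algebroid $(A\oplus A^*, (\mu+\psi)_{\mathcal{T}_i})$. The purely algebraic axioms on the $\mathcal{S}_k$ (involutivity up to $\varepsilon_k$, skew-symmetry, pairwise anticommutativity) are unaffected, so the content is the compatibility $((\mu+\psi)_{\mathcal{T}_i})_{\mathcal{S}_k,\mathcal{S}_k} = \varepsilon_k (\mu+\psi)_{\mathcal{T}_i}$ for each $k$. I would derive this via the graded Jacobi identity of the big bracket, using the original compatibility $(\mu+\psi)_{\mathcal{S}_k,\mathcal{S}_k} = \varepsilon_k(\mu+\psi)$, the Nijenhuis property of $\mathcal{T}_i$, and the structural relation $\mathcal{T}_i = \varepsilon_{i-1}\mathcal{S}_{i-1}\mathcal{S}_{i+1}$ combined with the anticommutators $\mathcal{S}_k\mathcal{S}_\ell=-\mathcal{S}_\ell\mathcal{S}_k$ for $k\neq \ell$. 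This is the main computational obstacle: it amounts to moving $\mathcal{T}_i$ past two copies of $\mathcal{S}_k$ in the iterated big bracket and checking that each sign accumulated is exactly the one needed to recover $\varepsilon_k$ as the prefactor.

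Having established this, I decompose $(\mu+\psi)_{\mathcal{T}_i}$ by bidegree. Since $\mathcal{T}_i$ preserves the splitting $A \oplus A^*$, only the bidegree $(1,2)$ and $(0,3)$ components survive: the former equals $\mu_{N_i} = \bb{N_i}{\mu}$ and the latter is some $\psi_i' \in \Gamma(\wedge^3 A)$. Applying Proposition~\ref{HS_structure_mu+gamma+psi} in the reverse direction then yields that $(\omega_1,\omega_2,\omega_3)$ is a (para-)hypersymplectic structure with torsion on $(A, \mu_{N_i})$. The converse implication follows by symmetry: since $N_i$ is Nijenhuis with $N_i^2 = \varepsilon_i\,\mathrm{id}_A$, one has $(\mu_{N_i})_{N_i} = \mu_{N_i,N_i} = \varepsilon_i \mu$, so that deforming $(A, \mu_{N_i})$ once more by $N_i$ recovers $(A, \mu)$ up to the harmless global sign $\varepsilon_i$, and the same argument applies.
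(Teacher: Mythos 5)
Your forward direction follows the same route as the paper: lift via Proposition~\ref{HS_structure_mu+gamma+psi} to a (para-)hypersymplectic structure on $(A\oplus A^*,\mu+\psi)$, deform by $\mathcal{T}_i$, decompose by bidegree, and come back down. The one caveat there is that your ``core claim'' --- that $(\mathcal{S}_1,\mathcal{S}_2,\mathcal{S}_3)$ stays (para-)hypersymplectic on $(A\oplus A^*,(\mu+\psi)_{\mathcal{T}_i})$ --- is exactly the content of Theorem~5.5 of \cite{AC14a}, which the paper cites rather than proves; you propose to establish it by big-bracket manipulations but only sketch it, so the main computational content of the theorem is left unverified in your write-up. (Also, the surviving component besides $\mu_{N_i}$ has bidegree $(3,0)$, not $(0,3)$; the paper moreover computes it explicitly as $\tfrac{\varepsilon_i}{2}[\pi_i,\pi_i]_{N_i}$ using $\bb{N_i}{\pi_i}=0$, which matters below.)

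The genuine gap is in your converse. You claim $(\mu_{N_i})_{N_i}=\mu_{N_i,N_i}=\varepsilon_i\mu$, but
$\mu_{N_i,N_i}=\varepsilon_i\mu+2\,{\text{\Fontlukas T}}_{\mu}N_i$ (since $N_i^2=\varepsilon_i\,\mathrm{id}_A$), so this identity is \emph{equivalent} to $N_i$ being Nijenhuis with respect to $\mu$ --- which is precisely what you do not know when you only assume the structure is hypersymplectic with torsion on $(A,\mu_{N_i})$. From that assumption, Theorem~\ref{Thm_PHST->Ni_Nijenhuis} only gives that $N_i$ is Nijenhuis with respect to $\mu_{N_i}$, i.e.\ $\bb{N_i}{\bb{N_i}{\bb{N_i}{\mu}}}=\varepsilon_i\bb{N_i}{\mu}$, which does not yield $\bb{N_i}{\bb{N_i}{\mu}}=\varepsilon_i\mu$; so ``deform once more to recover $\varepsilon_i\mu$'' is circular. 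The paper sidesteps this entirely: both Proposition~\ref{HS_structure_mu+gamma+psi} and the deformation statement (Theorem~5.5 of \cite{AC14a}) are equivalences, and the explicit identity $(\mu+\psi)_{\mathcal{T}_i}=\mu_{N_i}+\tfrac{\varepsilon_i}{2}[\pi_i,\pi_i]_{N_i}$ identifies the deformed pre-Courant structure with the lift of $(A,\mu_{N_i})$, so the whole chain can be run backwards. To repair your argument, prove (or cite) the pre-Courant deformation step as an ``if and only if'' and compute the $(3,0)$ component explicitly instead of leaving it as ``some $\psi_i'$''; then your converse follows as in the paper, without the double-deformation trick.
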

\begin{proof}
    Let $(\omega_1, \omega_2, \omega_3)$ be a (para-)hypersymplectic structure with torsion on $(A, \mu)$ and fix $i \in \{1,2,3\}$. From Theorem 5.5 in~\cite{AC14a} and Proposition~\ref{HS_structure_mu+gamma+psi} we get that $(\mathcal{S}_1, \mathcal{S}_2, \mathcal{S}_3)$ is a (para-)hypersymplectic structure on the pre-Courant algebroid $(A\oplus A^*, (\mu+\psi)_{\mathcal{T}_i})$, with $\psi= \frac{\varepsilon_k}{2}[\pi_k, \pi_k]$, for any $k \in \{1,2,3\}$. In the computation that follows we consider $k=i$. The pre-Courant structure $(\mu+\psi)_{\mathcal{T}_i}$ is given by $\bb{N_i}{\mu+\psi}$ and we  have
    \begin{align*}
        \bb{N_i}{\mu+\psi}&=\mu_{N_i}+ \bb{N_i}{-\frac{\varepsilon_i}{2}\bb{\pi_i}{\bb{\pi_i}{\mu}}}\\
        &=\mu_{N_i} - \frac{\varepsilon_i}{2} \bb{\bb{N_i}{\pi_i}}{\bb{\pi_i}{\mu}} -\frac{\varepsilon_i}{2} \bb{\pi_i}{\bb{\bb{N_i}{\pi_i}}{\mu}}\\
        &\quad\quad - \frac{\varepsilon_i}{2} \bb{\pi_i}{\bb{\pi_i}{\bb{N_i}{\mu}}}\\
        &=\mu_{N_i} + \frac{\varepsilon_i}{2} [\pi_i,\pi_i]_{N_i},
    \end{align*}
    where we used, in the last equality, the fact that $\bb{N_i}{\pi_i}=0$ (see~\cite{AC13}). Applying Proposition~\ref{HS_structure_mu+gamma+psi} we conclude that $(\omega_1, \omega_2, \omega_3)$ is a (para-)hypersymplectic structure with torsion on $(A, \mu_{N_i})$.

    The converse holds because the statements of Theorem 5.5 in~\cite{AC14a} and Proposition~\ref{HS_structure_mu+gamma+psi} are equivalences.
\end{proof}

The proof of Theorem~\ref{thm_PHST_on_A_iff_PHST_on_A_N} can be summarized in the diagram:

$$\xymatrixcolsep{4pc}\xymatrix@R=2.5pc{
    *\txt{ $(\omega_1, \omega_2, \omega_3)$ (para-)HST\\ on $(A,\mu)$} \ar@{<=>}[r]^{\scriptsize Thm~\ref{thm_PHST_on_A_iff_PHST_on_A_N}} \ar@{<=>}[d]_{\scriptsize Prop~\ref{HS_structure_mu+gamma+psi}}&*\txt{ $(\omega_1, \omega_2, \omega_3)$ (para-)HST\\ on $(A,\mu_{N_i})$} \ar@{<=>}[d]^{\scriptsize Prop~\ref{HS_structure_mu+gamma+psi}}\\
    *\txt{ $(\mathcal{S}_1, \mathcal{S}_2, \mathcal{S}_3)$ (para-)HS\\ on $(A\oplus A^*,\mu+\psi)$} \ar@{<=>}[r]^{\scriptsize Thm~5.5}_{\scriptsize in~[5]}&  *\txt{ $(\mathcal{S}_1, \mathcal{S}_2, \mathcal{S}_3)$ (para-)HS\\ on $(A\oplus A^*,(\mu+\psi)_{\mathcal{T}_i})$}
    }$$

\

\noindent where we used the abbreviations HS and HST for hypersymplectic and hypersymplectic with torsion, respectively.

\begin{prop}\label{prop_Twist_Poisson_induces_Lie_algebroid_in_dual}
    Let $(A,\mu)$ be a Lie algebroid and $\pi$ a non-degenerate bivector on $A$ with inverse $\omega$. Then, $\gamma^{\pi}:=\mu_{\pi}+\frac{1}{2}\bb{\omega}{[\pi, \pi]}$ is a Lie algebroid structure on $A^*$.
\end{prop}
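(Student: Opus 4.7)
The plan is to work in the supergeometric formalism that pervades the paper. Under the identification of sections of $\bigwedge^\bullet A \otimes \bigwedge^\bullet A^*$ with functions on $T^*[2]A[1]$, the big bracket $\bb{\cdot}{\cdot}$ has bidegree $(-1,-1)$, and $\gamma^{\pi}$ defines a Lie algebroid structure on $A^*$ exactly when it is a function of bidegree $(2,1)$ satisfying the Maurer--Cartan equation $\bb{\gamma^{\pi}}{\gamma^{\pi}}=0$. The bidegree check is immediate: $\mu_{\pi}=\bb{\pi}{\mu}$ has bidegree $(2,0)+(1,2)-(1,1)=(2,1)$, and $\bb{\omega}{[\pi,\pi]}$ combines $\omega$ of bidegree $(0,2)$ with $[\pi,\pi]$ of bidegree $(3,0)$, again producing $(2,1)$. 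Hence $\gamma^{\pi}$ is a legitimate candidate; by construction its anchor reduces to $\rho\circ\pi^{\sharp}$ on the first summand, while the $\d\omega$-correction is encoded in the second.

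The substance is the Maurer--Cartan equation. Expansion using bilinearity and graded symmetry gives
\begin{equation*}
\bb{\gamma^{\pi}}{\gamma^{\pi}}=\bb{\mu_{\pi}}{\mu_{\pi}}+\bb{\mu_{\pi}}{\bb{\omega}{[\pi,\pi]}}+\tfrac{1}{4}\bb{\bb{\omega}{[\pi,\pi]}}{\bb{\omega}{[\pi,\pi]}}.
\end{equation*}
The first summand is handled by the graded Jacobi identity and $\bb{\mu}{\mu}=0$, producing the standard identity $\bb{\mu_{\pi}}{\mu_{\pi}}=\bb{\mu}{[\pi,\pi]}$ (up to a convention-dependent sign), which is the classical obstruction to $\mu_{\pi}$ being Lie. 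The supergeometric form of Lemma~\ref{pi_omega_inverses} expresses $[\pi,\pi]$ as a threefold nested bracket of $\pi$ with $\d\omega=\bb{\omega}{\mu}$; substituting this expression into $\bb{\mu}{[\pi,\pi]}$ and reapplying Jacobi, the result rearranges into a term that cancels the cross term $\bb{\mu_{\pi}}{\bb{\omega}{[\pi,\pi]}}$. For the last summand one uses the same substitution so that each remaining contribution involves two copies of $\omega$ paired against nested powers of $\pi$, which collapse either by graded skew-symmetry or by saturation of the available $A^*$-degree.

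The main obstacle is the sign and combinatorial bookkeeping in the nested-bracket manipulations, particularly in matching $\bb{\mu}{[\pi,\pi]}$ with the cross term after the Lemma~\ref{pi_omega_inverses} rewriting. As a safeguard and independent verification one can unpack $\gamma^{\pi}$ into its action on sections of $A^*$, obtaining the bracket
\begin{equation*}
[\alpha,\beta]_{\gamma^{\pi}}=\lie_{\pi^{\sharp}\alpha}\beta-\iota_{\pi^{\sharp}\beta}\,\d\alpha+\d\omega(\pi^{\sharp}\alpha,\pi^{\sharp}\beta,\cdot),\qquad \alpha,\beta\in\Gamma(A^*),
\end{equation*}
together with the anchor $\rho\circ\pi^{\sharp}$. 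This is the cotangent bracket classical for a twisted Poisson structure~\cite{sw01}, carried over to the Lie algebroid setting; its Jacobi identity follows directly from the twisted Poisson relation of Lemma~\ref{pi_omega_inverses}, and the Leibniz rule is automatic from the derivation properties of $\lie$ and $\iota$. This concrete computation provides an alternative proof and also identifies the induced Lie algebroid bracket explicitly.
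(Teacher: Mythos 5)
Your primary route is the same as the paper's (expand $\bb{\gamma^{\pi}}{\gamma^{\pi}}$ into the three summands and cancel via the graded Jacobi identity), but the two statements that do the real work are only asserted, and one of them is inaccurate as described. The quartic term $\tfrac{1}{4}\bb{\bb{\omega}{[\pi,\pi]}}{\bb{\omega}{[\pi,\pi]}}$ does \emph{not} collapse by graded skew-symmetry or degree saturation: by Jacobi it splits as $\tfrac{1}{4}\bb{\bb{\bb{\omega}{[\pi,\pi]}}{\omega}}{[\pi,\pi]}+\tfrac{1}{4}\bb{\omega}{\bb{\bb{\omega}{[\pi,\pi]}}{[\pi,\pi]}}$, and only the second piece vanishes (by Jacobi and bidegree); the first piece survives, since $\bb{\bb{\omega}{[\pi,\pi]}}{\omega}=4\bb{\pi}{\bb{\omega}{\mu}}$ (this is where the inverse relation between $\omega$ and $\pi$ enters), and it is exactly this surviving contribution that must be combined with $\bb{\mu_{\pi}}{\mu_{\pi}}=\bb{\mu}{[\pi,\pi]}$ and the cross term. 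Moreover, after all the Jacobi rearrangements one is not left with zero but with $\bb{\omega}{\bb{\mu_{\pi}}{[\pi,\pi]}}$, so the final and nontrivial step is the identity $\bb{\mu_{\pi}}{[\pi,\pi]}=0$, which your plan never addresses. As it stands, the announced cancellation scheme (substituting Lemma~\ref{pi_omega_inverses} into $\bb{\mu}{[\pi,\pi]}$ to kill the cross term, and discarding the quartic term) would not close.

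Your fallback via the explicit bracket $[\alpha,\beta]_{\gamma^{\pi}}=\lie_{\pi^{\sharp}\alpha}\beta-\iota_{\pi^{\sharp}\beta}\,\d\alpha+\d\omega(\pi^{\sharp}\alpha,\pi^{\sharp}\beta,\cdot)$ with anchor $\rho\circ\pi^{\sharp}$ is a genuinely different and viable route, close in spirit to the paper's remark invoking Roytenberg's twisting construction; but as written it is a citation rather than a proof. You would need (i) to verify that the bidegree-$(2,1)$ function $\gamma^{\pi}=\mu_{\pi}+\tfrac{1}{2}\bb{\omega}{[\pi,\pi]}$ really unpacks to this bracket (a short but sign-sensitive computation identifying $\tfrac{1}{2}\bb{\omega}{[\pi,\pi]}$ with the correction $\d\omega(\pi^{\sharp}\cdot,\pi^{\sharp}\cdot,\cdot)$ via Lemma~\ref{pi_omega_inverses} and $\pi^{\sharp}\circ\omega^{\flat}=\mathrm{id}_A$), and (ii) to actually check the Jacobi identity for this twisted Koszul bracket from the relation $\tfrac{1}{2}[\pi,\pi]=\d\omega(\pi^{\sharp}\cdot,\pi^{\sharp}\cdot,\pi^{\sharp}\cdot)$ in the Lie algebroid setting, since the reference you invoke treats the manifold case. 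Either completing that computation, or supplying the missing identities in the big-bracket route (in particular $\bb{\bb{\omega}{[\pi,\pi]}}{\omega}=4\bb{\pi}{\bb{\omega}{\mu}}$ and $\bb{\mu_{\pi}}{[\pi,\pi]}=0$), is needed before the argument can be considered a proof.
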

\begin{proof}
Aiming to prove that $\bb{\gamma^{\pi}}{\gamma^{\pi}}=0$, we compute
\begin{align}
\bb{\gamma^{\pi}}{\gamma^{\pi}}&=\bb{\mu_{\pi}+\frac{1}{2}\bb{\omega}{[\pi, \pi]}}{\mu_{\pi}+\frac{1}{2}\bb{\omega}{[\pi, \pi]}}\nonumber\\
        &=\bb{\mu_{\pi}}{\mu_{\pi}}+ \bb{\mu_{\pi}}{\bb{\omega}{[\pi, \pi]}} + \frac{1}{4} \bb{\bb{\omega}{[\pi, \pi]}}{\bb{\omega}{[\pi, \pi]}}\nonumber\\
        &=\bb{\mu_{\pi}}{\mu_{\pi}}+ \bb{\mu_{\pi}}{\bb{\omega}{[\pi, \pi]}} + \frac{1}{4} \bb{\bb{\bb{\omega}{[\pi, \pi]}}{\omega}}{[\pi, \pi]}\label{eq_aux_proof_(gamma^pi,gamma^pi)=0}\\
        &\quad\quad\quad\quad + \frac{1}{4} \bb{\omega}{\bb{\bb{\omega}{[\pi, \pi]}}{[\pi, \pi]}},\nonumber
\end{align}
where we used the Jacobi identity in the last equality.
Using again the Jacobi identity and taking into account the bidegree, we get $\bb{\bb{\omega}{[\pi, \pi]}}{[\pi, \pi]}=0$. Furthermore, a straightforward computation leads to $\bb{\bb{\omega}{[\pi, \pi]}}{\omega}=4\bb{\pi}{\bb{\omega}{\mu}}$. Therefore, equality (\ref{eq_aux_proof_(gamma^pi,gamma^pi)=0}) becomes
\begin{equation}\label{eq_aux2_proof_(gamma^pi,gamma^pi)=0}
\bb{\gamma^{\pi}}{\gamma^{\pi}}=\bb{\mu_{\pi}}{\mu_{\pi}}+ \bb{\mu_{\pi}}{\bb{\omega}{[\pi, \pi]}} + \bb{\bb{\pi}{\bb{\omega}{\mu}}}{[\pi, \pi]}.
\end{equation}
A simple computation shows that $\bb{\mu_{\pi}}{\mu_{\pi}}=\bb{\mu}{[\pi, \pi]}$ and the Jacobi identity applied to the last two terms of (\ref{eq_aux2_proof_(gamma^pi,gamma^pi)=0}) yields
\begin{multline*}
    \bb{\gamma^{\pi}}{\gamma^{\pi}}=\bb{\mu}{[\pi, \pi]}+ \bb{\bb{\mu_{\pi}}{\omega}}{[\pi, \pi]} + \bb{\omega}{\bb{\mu_{\pi}}{[\pi, \pi]}}\\
            + \bb{\bb{-{\rm id}_A}{\mu}}{[\pi, \pi]} + \bb{\bb{\omega}{\bb{\pi}{\mu}}}{[\pi, \pi]}.
\end{multline*}
Since $\bb{{\rm id}_A}{\mu}= \mu$ and $\bb{\mu_{\pi}}{\omega}=\bb{\bb{\pi}{\mu}}{\omega}=-\bb{\omega}{\bb{\pi}{\mu}}$, we obtain
$$\bb{\gamma^{\pi}}{\gamma^{\pi}}=\bb{\omega}{\bb{\mu_{\pi}}{[\pi, \pi]}}.$$
Finally, a cumbersome computation shows that $\bb{\mu_{\pi}}{[\pi, \pi]}=0$ and completes the proof.
\end{proof}

\begin{rem}
In the proof of Proposition~\ref{prop_Twist_Poisson_induces_Lie_algebroid_in_dual} we only use the properties of the big bracket. However, the proof can be done using the operation of twisting by a bivector and by a $2$-form that was introduced in \cite{roy}. Let us briefly explain this, using
the notation of \cite{roy}. The twisting of $(\mu,0,0,0)$ by $- \omega$ yields the quasi-Lie bialgebroid structure $(\mu,0,0, \{\mu, \omega\})$ on $(A,A^*)$ and the twisting of $(\mu,0,0,\{\mu, \omega\})$ by $\pi$ gives
\begin{equation}  \label{twist_pi}
\left(\mu+\{\pi,\{\mu,\omega\}\},\, \{\mu,\pi \} + \frac{1}{2}\{\omega, [\pi,\pi]\},\, 0,\, \{\mu,\omega\}\right).
\end{equation}
From Lemma \ref{pi_omega_inverses}, the  twisted Maurer-Cartan equation
$$[\pi, \pi]= 2 d \omega\left(\pi^\sharp(\cdot), \pi^\sharp(\cdot),\pi^\sharp(\cdot)\right)$$ holds, so that (\ref{twist_pi}) is a quasi-Lie bialgebroid structure on  $(A,A^*)$, as it is proved in~\cite{roy}. This, in turn, implies that the term of bidegree $(2,1)$ in (\ref{twist_pi}),
\begin{equation}  \label{quasi_Lie_A*}
 \{\mu,\pi \} + \frac{1}{2}\{\omega, [\pi,\pi]\},
\end{equation}
is a Lie algebroid structure on $A^*$.
\end{rem}

In the next theorem we show that there is a one-to-one correspondence between (para-)hypersymplectic structures with torsion on $A$ and on $A^*$.

\begin{thm}\label{Thm_PHST_on_A_iff_PHST_on_A*}
    The triplet $(\omega_1, \omega_2, \omega_3)$ is a (para-)hypersymplectic structure with torsion on $(A, \mu)$ if and only if $(\pi_1, \pi_2, \pi_3)$ is a (para-)hypersymplectic structure with torsion on $\left(A^*, \varepsilon_i \gamma^{\pi_i}\right)$, where $\gamma^{\pi_i}:=\mu_{\pi_i}+\frac{1}{2}\bb{\omega_i}{[\pi_i, \pi_i]}, i=1,2,3$, and $\pi_i$ is the inverse of $\omega_i$.
\end{thm}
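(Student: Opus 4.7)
The plan is to mimic the strategy of Theorem~\ref{thm_PHST_on_A_iff_PHST_on_A_N}, substituting deformation by the hypersymplectic morphisms $\mathcal{S}_i$ for deformation by the Nijenhuis transition morphisms $\mathcal{T}_i$. First, by Proposition~\ref{HS_structure_mu+gamma+psi}, $(\omega_1,\omega_2,\omega_3)$ is a (para-)hypersymplectic structure with torsion on $(A,\mu)$ if and only if the triplet $(\mathcal{S}_1,\mathcal{S}_2,\mathcal{S}_3)$ defined by~(\ref{definition_Si}) is a (para-)hypersymplectic structure on the pre-Courant algebroid $(A\oplus A^*,\mu+\psi)$, with $\psi=\frac{\varepsilon_k}{2}[\pi_k,\pi_k]$ independent of $k$ by Theorem~\ref{thm_HST_equivalent_contravariant_form}. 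A direct matrix computation, using $A^{**}\cong A$, shows that the endomorphism $\tilde{\mathcal{S}}_i$ obtained from $(\pi_1,\pi_2,\pi_3)$ viewed as $2$-forms on $A^*$ (with inverses the $\omega_i$) via the analogue of~(\ref{definition_Si}) equals $\varepsilon_i\mathcal{S}_i$; consequently, the (para-)hypersymplectic conditions for $(\tilde{\mathcal{S}}_1,\tilde{\mathcal{S}}_2,\tilde{\mathcal{S}}_3)$ on any Hamiltonian $\tilde\Theta$ on $A^*\oplus A^{**}$ coincide with those for $(\mathcal{S}_1,\mathcal{S}_2,\mathcal{S}_3)$ on $(A\oplus A^*,\tilde\Theta)$.

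Next, I would identify the candidate dual Hamiltonian via the big-bracket calculation
\begin{equation*}
\Theta_i:=\bb{\mathcal{S}_i}{\mu+\psi}=\bb{\omega_i}{\mu}+\varepsilon_i\bb{\pi_i}{\mu}+\bb{\omega_i}{\psi}+\varepsilon_i\bb{\pi_i}{\psi},
\end{equation*}
using $\bb{\omega_i}{\mu}=\d\omega_i$, $\bb{\pi_i}{\mu}=\mu_{\pi_i}$, the vanishing of $\bb{\pi_i}{\psi}$ by bidegree, and $\bb{\omega_i}{\psi}=\frac{\varepsilon_i}{2}\bb{\omega_i}{[\pi_i,\pi_i]}$. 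Hence
\begin{equation*}
\Theta_i=\d\omega_i+\varepsilon_i\gamma^{\pi_i},
\end{equation*}
whose bidegree $(2,1)$ component is exactly the Lie algebroid structure $\varepsilon_i\gamma^{\pi_i}$ on $A^*$ and whose bidegree $(0,3)$ component $\d\omega_i\in\Gamma(\wedge^3 A^*)$ is a $3$-vector on $A^*$. Thus $\Theta_i$ has exactly the form required by Proposition~\ref{HS_structure_mu+gamma+psi} applied from the $A^*$-side, with $\tilde\psi_i:=\d\omega_i$.

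The heart of the proof is then to verify that $(\mathcal{S}_1,\mathcal{S}_2,\mathcal{S}_3)$ remains a (para-)hypersymplectic structure on $(A\oplus A^*,\Theta_i)$. The algebraic conditions on the $\mathcal{S}_j$ are automatic; the Hamiltonian compatibility $\Theta_{i,\mathcal{S}_j,\mathcal{S}_j}=\varepsilon_j\Theta_i$ is immediate for $j=i$ from $\bb{\mathcal{S}_i}{\Theta_i}=\varepsilon_i(\mu+\psi)$. For $j\neq i$ the argument rests on the graded Jacobi identity of the big bracket combined with the concomitant-vanishing identity $C_{\mu+\psi}(\mathcal{S}_i,\mathcal{S}_j)=0$ of Proposition~5.4 in~\cite{AC14a}, which reads $\bb{\mathcal{S}_j}{\bb{\mathcal{S}_i}{\mu+\psi}}=-\bb{\mathcal{S}_i}{\bb{\mathcal{S}_j}{\mu+\psi}}$, together with $\mathcal{S}_i\mathcal{S}_j+\mathcal{S}_j\mathcal{S}_i=0$. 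Once this is established, Proposition~\ref{HS_structure_mu+gamma+psi} applied to $(A^*,\varepsilon_i\gamma^{\pi_i})$ yields the forward implication; the converse follows by the symmetry under $A\leftrightarrow A^*$, using Proposition~\ref{prop_Twist_Poisson_induces_Lie_algebroid_in_dual} (applied to $A^*$) to identify the induced Lie algebroid on $A^{**}=A$. The expected main obstacle is precisely this graded-Jacobi calculation for $j\neq i$: unlike the Nijenhuis deformation by $\mathcal{T}_i$ covered by Theorem~5.5 of~\cite{AC14a}, the $\mathcal{S}_i$-deformation requires careful tracking of the non-trivial commutator $[\mathcal{S}_j,\mathcal{S}_i]$ and verification that its contribution to $\Theta_{i,\mathcal{S}_j,\mathcal{S}_j}$ cancels against the Jacobi remainder.
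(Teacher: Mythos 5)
Your overall architecture coincides with the paper's: pass to the double via Proposition~\ref{HS_structure_mu+gamma+psi}, deform the Hamiltonian by $\mathcal{S}_i$, compute $(\mu+\psi)_{\mathcal{S}_i}=\varepsilon_i\gamma^{\pi_i}+\mu_{\omega_i}$ (your $\Theta_i=\varepsilon_i\gamma^{\pi_i}+\d\omega_i$), recognise the bidegree $(0,3)$ part as the trivector on $A^*$ required by the proposition read from the $A^*$-side, and conclude; your observation that the dual-side endomorphisms are $\tilde{\mathcal{S}}_i=\varepsilon_i\mathcal{S}_i$ is correct and plays the role of the paper's implicit identification. The genuine gap is at what you yourself call the heart of the proof: the claim that $(\mathcal{S}_1,\mathcal{S}_2,\mathcal{S}_3)$ remains (para-)hypersymplectic on $(A\oplus A^*,(\mu+\psi)_{\mathcal{S}_i})$. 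The paper does not prove this ad hoc; it is exactly (part of) Theorem~5.5 of \cite{AC14a}, which covers deformations by the $\mathcal{S}_i$'s as well as by the $\mathcal{T}_i$'s (you assume it only treats the $\mathcal{T}_i$-case), and is stated there as an equivalence, which is also what makes the converse immediate. Your proposed replacement — graded Jacobi identity plus $C_{\mu+\psi}(\mathcal{S}_i,\mathcal{S}_j)=0$ plus anticommutativity — does not close the step. Concretely, set $\Theta=\mu+\psi$ and $X=\bb{\mathcal{S}_j}{\bb{\mathcal{S}_j}{\bb{\mathcal{S}_i}{\Theta}}}$ for $j\neq i$; using the concomitant identity and $\Theta_{\mathcal{S}_j,\mathcal{S}_j}=\varepsilon_j\Theta$, Jacobi gives $X=-\bb{\bb{\mathcal{S}_j}{\mathcal{S}_i}}{\bb{\mathcal{S}_j}{\Theta}}-\varepsilon_j\bb{\mathcal{S}_i}{\Theta}$, and expanding the first term once more by Jacobi with the same two identities returns $-X-\varepsilon_j\bb{\mathcal{S}_i}{\Theta}$, so the computation collapses to the tautology $X=X$. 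What is actually needed are the finer structural identities of a hypersymplectic triple on a pre-Courant algebroid, e.g.\ that $\bb{\mathcal{S}_j}{\mathcal{S}_i}$ is (up to sign and $\varepsilon$'s) the transition morphism $\mathcal{T}_k$ of the third index and that $\Theta_{\mathcal{S}_j,\mathcal{T}_k}$ equals $\pm\Theta_{\mathcal{S}_i}$ — precisely the machinery established in \cite{AC14a} and packaged into its Theorem~5.5, which is the citation your argument is missing.

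Two smaller points. First, to invoke Proposition~\ref{HS_structure_mu+gamma+psi} from the $A^*$-side you need $\varepsilon_i\gamma^{\pi_i}$ to be a genuine Lie algebroid structure; this is Proposition~\ref{prop_Twist_Poisson_induces_Lie_algebroid_in_dual}, which you only cite in the converse, whereas it is needed already in the forward direction. (The paper, in addition, identifies $\mu_{\omega_i}=\tfrac{1}{2}[\omega_i,\omega_i]_{\varepsilon_i\gamma^{\pi_i}}$, making the dual-side trivector explicit.) Second, your converse ``by symmetry under $A\leftrightarrow A^*$'' would require verifying that the double-dual construction returns $\mu$; the paper avoids this entirely because both Proposition~\ref{HS_structure_mu+gamma+psi} and Theorem~5.5 of \cite{AC14a} are stated as equivalences, so the backward implication is free.
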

Under the conditions of Theorem \ref{Thm_PHST_on_A_iff_PHST_on_A*}, the Lie algebroid structures on $A^*$ can be written as $\varepsilon_i\mu_{\pi_i}+\bb{\omega_i}{\psi}, i=1,2,3$, with $\psi=\frac{\varepsilon_k}{2}[\pi_k, \pi_k]$, for any $k \in \{1,2,3\}$.

\begin{proof}[Proof of Theorem \ref{Thm_PHST_on_A_iff_PHST_on_A*}]
Let $(\omega_1, \omega_2, \omega_3)$ be a (para-)hypersymplectic structure with torsion on $(A, \mu)$ and fix $i \in \{1,2,3\}$. From Theorem 5.5 in~\cite{AC14a} and Proposition~\ref{HS_structure_mu+gamma+psi} we have that $(\mathcal{S}_1, \mathcal{S}_2, \mathcal{S}_3)$ is a (para-)hypersymplectic structure on the pre-Courant algebroid $(A\oplus A^*, (\mu+\psi)_{\mathcal{S}_i})$, with $\psi= \frac{\varepsilon_k}{2}[\pi_k, \pi_k]$, for any $k \in \{1,2,3\}$. In the computation that follows we consider $k=i$. The pre-Courant structure $(\mu+\psi)_{\mathcal{S}_i}$ is given by 
    \begin{align*}
        (\mu+\psi)_{\mathcal{S}_i}
        &=\varepsilon_i \gamma^{\pi_i}+\mu_{\omega_i}.
    \end{align*}
   A direct computation shows that $\mu_{\omega_i}=\frac{\varepsilon_i}{2}[\omega_i, \omega_i]_{\gamma^{\pi_i}}$, where $[\cdot, \cdot]_{\gamma^{\pi_i}}$ stands for the Schouten-Nijenhuis bracket of the Lie algebroid structure $\gamma^{\pi_i}$ on $A^*$. Then,
     $$(\mu+\psi)_{\mathcal{S}_i}=\varepsilon_i \gamma^{\pi_i} + \frac{\varepsilon_i}{2}[\omega_i, \omega_i]_{\gamma^{\pi_i}}=\varepsilon_i \gamma^{\pi_i} + \frac{1}{2}[\omega_i, \omega_i]_{\varepsilon_i \gamma^{\pi_i}}$$
    and, using Proposition~\ref{HS_structure_mu+gamma+psi}, the result follows.

    The converse holds because the statements of Theorem 5.5 in~\cite{AC14a} and Proposition~\ref{HS_structure_mu+gamma+psi} are equivalences.
\end{proof}

The proof of Theorem~\ref{Thm_PHST_on_A_iff_PHST_on_A*} can be summarized in the diagram:

$$\xymatrixcolsep{4pc}\xymatrix@R=2.5pc{
    *\txt{ $(\omega_1, \omega_2, \omega_3)$ (para-)HST\\ on $(A,\mu)$} \ar@{<=>}[r]^{\scriptsize Thm~\ref{Thm_PHST_on_A_iff_PHST_on_A*}} \ar@{<=>}[d]_{\scriptsize Prop~\ref{HS_structure_mu+gamma+psi}}&*\txt{ $(\pi_1, \pi_2, \pi_3)$ (para-)HST\\ on $\left(A^*, \varepsilon_i \gamma^{\pi_i}\right)$} \ar@{<=>}[d]^{\scriptsize Prop~\ref{HS_structure_mu+gamma+psi}}\\
    *\txt{ $(\mathcal{S}_1, \mathcal{S}_2, \mathcal{S}_3)$ (para-)HS\\ on $(A\oplus A^*,\mu+\psi)$} \ar@{<=>}[r]^{\scriptsize Thm~5.5}_{\scriptsize in~[5]}&  *\txt{ $(\mathcal{S}_1, \mathcal{S}_2, \mathcal{S}_3)$ (para-)HS\\ on $(A\oplus A^*,(\mu+\psi)_{\mathcal{S}_i})$}
    }$$



\

\

\

\noindent {\bf Acknowledgments.}
This work was partially supported by the Centre for Mathematics of the University of Coimbra -- UID/MAT/00324/2013, funded by the Portuguese Government through FCT/MEC and co-funded by the European Regional Development Fund through the Partnership Agreement PT2020.


\end{document}